\documentclass[reqno,b5paper]{amsart}

\usepackage{amsmath}
\usepackage{amssymb}
\usepackage{amsthm}
\usepackage{enumerate}
\usepackage[mathscr]{eucal}
\usepackage{eqlist}

\setlength{\textwidth}{121.9mm}
\setlength{\textheight}{176.2mm}


\theoremstyle{plain}
\newtheorem{theorem}{Theorem}[section]
\newtheorem{prop}[theorem]{Proposition}
\newtheorem{lemma}[theorem]{Lemma}
\newtheorem{corol}[theorem]{Corollary}

\newtheorem{definition}[theorem]{Definition}



\theoremstyle{remark}
\newtheorem{example}{Example}



\numberwithin{equation}{section}



\begin{document}
\title[The Closure Operator of $es$-Splitting Matroids]%
{The Closure Operator of $es$-Splitting Matroids}
\author[S. B. Dhotre, P. P. Malavadkar \and M. M. Shikare]%
{S. B. Dhotre$^{1}$, P. P. Malavadkar$^{2}$  \and M. M. Shikare$^{3}$}

\newcommand{\acr}{\newline\indent}

\address{\llap{1\,}Department of Mathematics\acr
                   Savitribai Phule Pune University\acr
                   Pune-411007, India}
                 
\email{dsantosh2@yahoo.co.in}

\address{\llap{2\,}MIT College of Engineering\acr
                   Pune-411038, India}
                 
\email{pmalavadkar@gmail.com}

\address{\llap{3\,}Department of Mathematics\acr
                   Savitribai Phule Pune University\acr
                   Pune-411007, India}
\email{mmshikare@gmail.com}


\subjclass[2010]{Primary 05B35}
\keywords{Graph, binary matroid, es-splitting operation, closure operator}

\begin{abstract}
The es-splitting operation for binary matroids is a natural generalization of Slater's $n$-line splitting operation on graphs. In this paper, we characterize the closure operator of the es-splitting binary matroid $M^e_X$ in terms of the closure operator of the original binary matroid $M$. We also characterize the flats of the es-splitting binary matroid $M^e_X$ in terms of the flats of the original binary matroid $M$.
\end{abstract}

\maketitle

\section{Introduction} 
The es-splitting operation for binary matroids is a natural generalization of $n$-line splitting operation on graphs. The $n$-line splitting operation on graphs is specified by Slater \cite{SlaterC4CG} as follows.

Let $G$ be a graph and $e=uv$ be an edge of $G$ with deg $u\geq 2n-3$ with $u$ adjacent to $v,~x_{1},~x_{2},~...,~x_{k},~y_{1},~y_{2},~...,~y_{h},$ where $k$ and $h\geq n-2.$ Let $H$ be the graph obtained from $G$ by replacing $u$ by two adjacent vertices $u_{1}$ and $u_{2},$ with $v$ adj $u_{1},~v$ adj $u_{2},~u_{1}$ adj $x_{i}~(1\leq i\leq k),$ and $u_{2}$ adj $y_{j}~(1\leq j\leq h),$ where deg $u_{1}\geq n$ and deg $u_{2}\geq n.$ The transition from $G$ to $H$ is called an {\it $n$-line splitting operation}. In other words, we say that $H$ is an $n$-line splitting of $G.$ This construction is explicitly illustrated with the help of Figure 1.

\unitlength=0.7mm \special{em:linewidth 0.4pt} \linethickness{0.4pt}\begin{picture}(140.67,48.00)(10,0)
\put(35.00,20.00){\circle*{1.33}}
\put(70.00,20.00){\circle*{1.33}}
\put(70.00,42.33){\circle*{1.33}}
\put(35.00,42.33){\circle*{1.33}}
\put(52.67,30.67){\circle*{1.33}}
\put(52.67,15.00){\circle*{1.33}}
\put(35.00,35.00){\circle*{1.33}}
\put(70.00,35.00){\circle*{1.33}}
\put(52.67,30.67){\line(-3,2){17.67}}
\put(52.67,30.67){\line(-4,1){17.67}}
\put(52.67,30.67){\line(-5,-3){17.67}}
\put(52.67,15.00){\line(0,1){15.67}}
\put(52.67,30.67){\line(3,2){17.33}}
\put(52.67,30.33){\line(4,1){17.33}}
\put(52.67,30.67){\line(5,-3){17.33}}
\put(52.67,34.33){\makebox(0,0)[cc]{$u$}}
\put(52.67,11.67){\makebox(0,0)[cc]{$v$}}
\put(30.67,42.67){\makebox(0,0)[cc]{$x_{1}$}}
\put(30.67,34.67){\makebox(0,0)[cc]{$x_{2}$}}
\put(30.67,19.33){\makebox(0,0)[cc]{$x_{k}$}}
\put(74.33,19.33){\makebox(0,0)[cc]{$y_{h}$}}
\put(74.33,34.67){\makebox(0,0)[cc]{$y_{2}$}}
\put(74.33,43.00){\makebox(0,0)[cc]{$y_{1}$}}
\put(50.33,22.67){\makebox(0,0)[cc]{$e$}}
\put(38.33,31.67){\circle*{0.67}}
\put(38.33,28.67){\circle*{0.67}}
\put(38.33,25.67){\circle*{0.67}}
\put(65.33,31.00){\circle*{0.67}}
\put(65.33,28.33){\circle*{0.67}}
\put(65.33,26.00){\circle*{0.67}}
\put(90.00,20.00){\circle*{1.33}}
\put(136.67,20.00){\circle*{1.33}}
\put(136.67,35.00){\circle*{1.33}}
\put(136.67,42.33){\circle*{1.33}}
\put(90.00,42.33){\circle*{1.33}}
\put(90.00,35.00){\circle*{1.33}}
\put(113.67,15.00){\circle*{1.33}}
\put(102.00,35.00){\circle*{1.33}}
\put(125.33,35.00){\circle*{1.33}}
\put(90.00,35.00){\line(1,0){46.67}}
\put(125.33,35.00){\line(3,2){11.33}}
\put(125.33,35.00){\line(3,-4){11.33}}
\put(125.33,35.00){\line(-3,-5){12.00}}
\put(113.67,15.00){\line(-3,5){12.00}}
\put(102.00,35.00){\line(-5,3){12.00}}
\put(102.00,35.00){\line(-4,-5){12.00}}
\put(102.00,39.00){\makebox(0,0)[cc]{$u_{1}$}}
\put(125.33,39.00){\makebox(0,0)[cc]{$u_{2}$}}
\put(85.67,42.33){\makebox(0,0)[cc]{$x_{1}$}}
\put(85.67,35.00){\makebox(0,0)[cc]{$x_{2}$}}
\put(85.67,20.00){\makebox(0,0)[cc]{$x_{k}$}}
\put(113.67,12.33){\makebox(0,0)[cc]{$v$}}
\put(140.67,42.33){\makebox(0,0)[cc]{$y_{1}$}}
\put(140.67,35.00){\makebox(0,0)[cc]{$y_{2}$}}
\put(140.67,20.00){\makebox(0,0)[cc]{$y_{h}$}}
\put(113.67,37.67){\makebox(0,0)[cc]{$a$}}
\put(105.00,24.33){\makebox(0,0)[cc]{$e$}}
\put(122.00,24.33){\makebox(0,0)[cc]{$\gamma$}}
\put(93.67,32.67){\circle*{0.67}}
\put(93.67,30.33){\circle*{0.67}}
\put(93.67,28.00){\circle*{0.67}}
\put(132.67,33.33){\circle*{0.67}}
\put(132.67,31.33){\circle*{0.67}}
\put(132.67,29.00){\circle*{0.67}}
\put(52.67,5.67){\makebox(0,0)[cc]{$G$}}
\put(113.67,5.67){\makebox(0,0)[cc]{$H$}}
\put(79.67,3.67){\makebox(0,0)[cc]{\bf {Figure 1}}}
\end{picture}
\vskip.2cm\noindent

Slater \cite{SlaterC4CG} characterized 4-connected graphs in terms of the 4-line splitting operation along with some other operations.

We study the relation between incidence matrices of the graphs $G$ and $H$, respectively. Suppose $G$ is a graph with $n$ vertices and $m$ edges. Let $X = \{e, x_1, x_2, \cdots, x_k\}$ be a subset of set of edges incident at $u$. The incident matrix $A$ of $G$ is a matrix of size $n \times m$ whose rows correspond to vertices and columns correspond to edges. The row corresponding to the vertex $u$ has $1$'s in the columns of $e, x_1, x_2, \cdots, x_k,$ $y_1, y_2, \cdots, y_h$ and $0$ in the other columns. The graph $H$ has $(n+1)$ vertices and $(m+2)$ edges. The incidence matrix $A'$ of $H$ is a matrix of size $(n+1) \times (m+2)$. The row corresponding to $u_2$ has $1$ in the columns of $y_1, y_2, \cdots, y_h, \gamma, a$ and $0$ in other columns, where as the row corresponding to the vertex $u_1$ has $1$ in the columns of $e, x_1, x_2, \cdots, x_k, a$ and $0$ in other columns. 

	One can see that the matrix $A'$ can be obtained from $A$ by adjoining an extra row (corresponding to the vertex $u_1$) to $A$ with entries zero every where except in the columns corresponding to $e, x_1, x_2, \cdots, x_k$ where it takes the value $1$. The row vector obtained by addition(mod 2) of row vectors corresponding to vertices $u$ and $u_1$ will corresponds to the row vector of the vertex $u_2$ and adjoining two columns labelled $a$ and $\gamma$ to the resulting matrix such that the column labelled a is zero everywhere except in the row corresponding to $u_1$ where it takes the value 1, and $\gamma$ is sum of the two column vectors corresponding to the elements $a$ and $e$ in $A'$. 

Noticing the above fact, Shikare and Azanchiler \cite{Habib, HES} extended the notion of $n$-line-splitting operation from graphs to binary matroids in the following way:
\begin{definition}\label{dfes} Let $M$ be a binary matroid on a set $E$ and let $X$ be a subset of $E$ with $e\in X$. Suppose $A$ is a matrix representation of $M$ over GF(2). Let $A^e_X$ be a matrix obtained from $A$ by adjoining an extra row $\delta_X$ to $A$ with entries zero every where except in the columns corresponding to the elements of $X$ where it takes the value 1 and then adjoining two columns labelled $a$ and $\gamma$ to the resulting matrix such that the column labelled a is zero everywhere except in the last row where it takes the value 1, and $\gamma$ is the sum of the two column vectors corresponding to the elements $a$ and $e$. The vector matroid of the matrix $A_X^e$ is denoted by $M_X^e$. The  transition from $M$ to $M_X^e$ is called an es-splitting operation. We call the matroid $M_X^e$ as es-splitting matroid. \end{definition} 
If $|X| = 2$ and $X = \{x, y\} \subseteq E(M)$, then we denote the matroid $M^e_X$ by $M^e_{x, y}$. 

\section{Properties of es-splitting matroids}
Several properties concerning es-splitting operation have been explored in \cite{HES, HChB, DMSES3C, MDSCESM, SDMRESCG}. The following proposition characterizes the circuits of the matroid  $M_X^e$ in terms of the circuits of the matroid $M$(see \cite{HES}).
\begin{prop}\label{CMXE} Let $M(E, \mathcal C)$ be a binary matroid on $E$ together with the collection of circuits $\mathcal C$. Suppose $X\subseteq E$, $e\in X$ and $a, \gamma \notin E$. Then $M_X^e = (E \cup \{a, \gamma\}, \mathcal C')$ where $\mathcal C' = \mathcal {C}$$_0$ $ \cup$ $ \mathcal C$$_1$ $ \cup $ $\mathcal C$$_2$ $ \cup $ $\mathcal C$$_3$ $ \cup$ $ \{\Delta \} $ with $\Delta = \{e, a, \gamma \}$ and \\
\begin{eqnarray}
\mbox{$\mathcal C$$_0$} &=& \mbox{$\{ C\in $ $\mathcal C$$ ~|~ C$ contains an even number of elements of $X$ $\}$;}\nonumber\\
\mbox{$\mathcal C$$_1$} &=& \mbox{The set of minimal members of $\{ C_1\cup C_2 ~|~ C_1, C_2\in $ $ \mathcal C$$, C_1\cap C_2 = \phi $}\nonumber\\
&&\mbox{ and each of $C_1$ and $C_2$ contains an odd number of elements of} \nonumber\\
&&\mbox{$X$ such that $C_1\cup C_2$ contains no member of $\mathcal C$$_0$ $\}$;}\nonumber\\
\mbox{$\mathcal C$$_2$} &=& \mbox{$\{C\cup \{a\}~|~C\in$ $\mathcal C$ and $C$ contains an odd number of elements of $X \}$;}\nonumber\\
\mbox{$\mathcal C$$_3$} &=& \mbox{$\{C\cup \{e, \gamma\}~|~ C\in $ $ \mathcal C$$, e\notin C$ and C contains an odd number of elements}\nonumber\\ 
&&\mbox{of X $\}$ $\cup $ $\{(C\setminus e)\cup \{\gamma\}~|~ C\in $ $ \mathcal C$$, e\in C$ and C contains an odd number}\nonumber\\
&&\mbox{of elements of $X$ $\}$ $\cup $ $\{(C\setminus e)\cup \{a, \gamma\}~|~ C\in $ $ \mathcal C$$, e\in C$ and $C\setminus e$}\nonumber\\
&&\mbox{contains an odd number of elements of $X$ $\}$.}\nonumber \end{eqnarray}
\end{prop}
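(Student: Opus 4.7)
The plan is to work directly with the GF(2) matrix $A_X^e$ from Definition 1.1, identifying circuits of $M_X^e$ with minimal subsets of columns whose coordinate-wise sum vanishes. I would split the analysis into four cases according to which of $a$ and $\gamma$ belong to a candidate circuit $C'$, and in each case translate the zero-sum condition into two requirements: a linear dependence condition on the original-row block (corresponding to a dependence in $M$) and a parity condition on $|T \cap X|$, where $T = C' \setminus \{a, \gamma\}$, coming from the extra row $\delta_X$.

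In case (i), $a, \gamma \notin C'$, so $C' \subseteq E$ and the conditions become: $C'$ is dependent in $M$ and $|C' \cap X|$ is even. The minimal such sets are either circuits of $M$ with an even number of $X$-elements (yielding $\mathcal{C}_0$) or minimal unions of two disjoint circuits of $M$, each with an odd number of $X$-elements, whose union contains no member of $\mathcal{C}_0$ (yielding $\mathcal{C}_1$). In case (ii), $a \in C'$ and $\gamma \notin C'$, so $|T \cap X|$ must be odd and $T$ must be a circuit of $M$, producing $\mathcal{C}_2$. For cases (iii) and (iv) I would exploit the identity $\gamma = a + e$ at the level of columns in $A_X^e$: the original-row part of the column sum on $C'$ equals the column sum of $T \triangle \{e\}$ in $A$, while the bottom entry is $|T \cap X|$ in case (iii) (requiring even parity) and $|T \cap X| + 1$ in case (iv) (requiring odd parity). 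Splitting each of these into the sub-cases $e \in C$ and $e \notin C$ for the underlying circuit $C \subseteq T \triangle \{e\}$ of $M$ produces exactly the three types comprising $\mathcal{C}_3$, while the degenerate sub-case $T \triangle \{e\} = \emptyset$ in case (iv) produces the special circuit $\Delta = \{e, a, \gamma\}$.

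The main obstacle I anticipate is the minimality analysis rather than the column-sum computations. For $\mathcal{C}_1$ one has to verify that unions of three or more disjoint odd-parity circuits of $M$ are never minimal (any two of them already form an element of $\mathcal{C}_1$) and to explain the role of the stipulation that $C_1 \cup C_2$ contain no member of $\mathcal{C}_0$. For the third type of $\mathcal{C}_3$ the candidate $(C \setminus e) \cup \{a, \gamma\}$ must be shown not to contain $\Delta$ or any circuit from the earlier classes; the crucial observations are that $e \notin C'$, so $\Delta \not\subseteq C'$, and that $C \setminus e$ is independent in $M$ because $C$ is a circuit, which blocks $\mathcal{C}_0$ and $\mathcal{C}_1$ subsets. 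The converse direction---that every listed set really is a circuit of $M_X^e$---reduces to a direct column-sum computation using $\gamma = a + e$ and the definition of $\delta_X$, combined with a reuse of the same case analysis to exclude proper circuit subsets.
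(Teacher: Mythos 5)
Your proposal is sound: the four-way case split on $\{a,\gamma\}\cap C'$, the translation of ``zero column sum over $\mathrm{GF}(2)$'' into a cycle-space condition on $T=C'\setminus\{a,\gamma\}$ (or on $T\bigtriangleup\{e\}$ when $\gamma\in C'$, via $\gamma=a+e$) plus a parity condition from the row $\delta_X$, and the minimality checks you flag are exactly what is needed, and I see no gap. Note, however, that the paper does not prove this proposition at all --- it is imported verbatim from \cite{HES} --- so there is no in-paper argument to compare against; your direct column-sum derivation is the standard route and would serve as a self-contained proof. The only point worth tightening when writing it out is that ``zero column sum in the original rows'' means $T$ (resp.\ $T\bigtriangleup\{e\}$) is a \emph{disjoint union of circuits} of $M$, not merely a dependent set; your subsequent minimality analysis already uses this correctly.
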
 

We denote by $\mathcal C$$_{OX}$ the set of all circuits of the matroid $M$ each of which contains an odd number of elements of the set $X\subseteq E(M)$. The members of the set $\mathcal C$$_{OX}$ are called \textit{OX-circuits}. On the other hand, $\mathcal C$$_{EX}$ denotes the set of all circuits of $M$ each of which contains an even number of elements of the set $X$. The members of the set $\mathcal C$$_{EX}$ are called \textit{EX-circuits}. \\

Let $M$ be a matroid on $E$ with rank function $r.$ Then the function  $cl$ from $2^{E}$ into $2^{E}$ defined, for all $A\subseteq E,$ by $cl(A)=\{x\in E~|~r(A\cup  x)=r(A)\},$ where $2^E$ denote the power set of $E,$ is called the {\it closure operator} of $M.$


\begin{prop} \cite{Oxley} Let $M$ be a matroid on $E$ and $A\subseteq E.$ Then\\
$cl(A)=A\cup\{x\in E~|~M$ has a circuit $C$ such that $x\in C\subseteq A\cup  x\}.$\end{prop}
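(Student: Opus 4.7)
The plan is to prove both set inclusions directly from the definition $cl(A) = \{x \in E : r(A \cup x) = r(A)\}$, together with the well-known property that when $I$ is independent and $I \cup y$ is dependent, every circuit contained in $I \cup y$ must contain $y$.

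For the inclusion $\subseteq$, I would take $x \in cl(A) \setminus A$, so that $r(A \cup x) = r(A)$. Picking a basis $B_A$ of $A$, one notes that $B_A \subseteq A \cup x$ is independent of size $r(A \cup x)$, so $B_A$ is also a basis of $A \cup x$. Hence $B_A \cup x$ is dependent and therefore contains a circuit $C$; independence of $B_A$ forces $x \in C$, and by construction $C \subseteq B_A \cup x \subseteq A \cup x$. This supplies the required circuit.

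For the reverse inclusion, the case $x \in A$ is immediate from $A \subseteq cl(A)$, so suppose $x \notin A$ and there is a circuit $C$ with $x \in C \subseteq A \cup x$. Then $C \setminus x \subseteq A$, and $C \setminus x$ is independent because $C$ is a circuit; extend $C \setminus x$ to a basis $B_A$ of $A$. Since $C \subseteq B_A \cup x$ and $C$ is dependent, $B_A \cup x$ is dependent as well. No independent subset of $A \cup x$ can strictly contain $B_A$, since enlarging $B_A$ by an element of $A$ would contradict maximality of $B_A$ in $A$, while enlarging by $x$ would contradict dependence of $B_A \cup x$. Thus $B_A$ is a basis of $A \cup x$ and $r(A \cup x) = |B_A| = r(A)$, giving $x \in cl(A)$.

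Both directions are short, and since the statement is classical (cited from Oxley) there is no substantial obstacle; the only care needed is to use basis extension on the independent set $C \setminus x$ in the reverse direction, and to read off a circuit from the fundamental-circuit property of $B_A \cup x$ in the forward direction.
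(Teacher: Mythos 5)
Your proof is correct. The paper gives no proof of this proposition at all --- it is quoted from Oxley's \emph{Matroid Theory} as a known fact --- and your argument is the standard textbook one: both inclusions follow from comparing ranks via maximal independent sets, using the fundamental-circuit property of $B_A\cup x$ in one direction and basis extension of the independent set $C\setminus x$ in the other.
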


\begin{lemma}\cite{Oxley} Suppose $M$ is a matroid on $E$ with the rank function $r,$ $A\subseteq E$ and $x\in cl(A)$. Then $cl(A\cup x) = cl(A)$.\end{lemma}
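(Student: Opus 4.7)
The plan is to use the characterization $y\in cl(S)\iff r(S\cup y)=r(S)$ (which is the definition given) and to establish the two inclusions $cl(A\cup x)\subseteq cl(A)$ and $cl(A)\subseteq cl(A\cup x)$ separately. The hypothesis $x\in cl(A)$ is converted into the rank identity $r(A\cup x)=r(A)$, which will be the engine of both inclusions.

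For the direction $cl(A\cup x)\subseteq cl(A)$, I would take an arbitrary $y\in cl(A\cup x)$, so that $r(A\cup x\cup y)=r(A\cup x)$. Combining this with the hypothesis gives $r(A\cup x\cup y)=r(A)$. Now monotonicity of the rank function yields the sandwich
\[
r(A)\ \leq\ r(A\cup y)\ \leq\ r(A\cup x\cup y)\ =\ r(A),
\]
which forces $r(A\cup y)=r(A)$, i.e. $y\in cl(A)$. This direction uses nothing beyond monotonicity of $r$.

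For the reverse direction $cl(A)\subseteq cl(A\cup x)$, I would take $y\in cl(A)$ and aim to show $r(A\cup x\cup y)=r(A\cup x)$. The monotonic bound $r(A\cup x)\leq r(A\cup x\cup y)$ is free, so the work is in the opposite inequality. Here I would apply submodularity of the rank function to the pair $S=A\cup x$ and $T=A\cup y$:
\[
r(S\cup T)+r(S\cap T)\ \leq\ r(S)+r(T).
\]
Since $A\subseteq S\cap T$, monotonicity gives $r(S\cap T)\geq r(A)$, and the hypotheses give $r(S)=r(T)=r(A)$. Substituting yields $r(A\cup x\cup y)\leq 2r(A)-r(A)=r(A)=r(A\cup x)$, which is the required inequality; equality with the monotone lower bound then gives $y\in cl(A\cup x)$.

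The main obstacle, if there is one, is remembering to bring in submodularity for the second inclusion: the first inclusion only needs monotonicity, which might tempt one into believing the second is equally cheap. Handling the corner cases $y\in A$ and $y=x$ requires no separate treatment, since they are absorbed by the rank equalities themselves.
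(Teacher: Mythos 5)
Your argument is correct and complete. Note that the paper itself offers no proof of this lemma at all --- it is quoted from Oxley's \emph{Matroid Theory} --- so there is no in-paper argument to compare against; what you have supplied is the standard rank-function proof. Both inclusions are handled soundly: the containment $cl(A\cup x)\subseteq cl(A)$ follows from monotonicity alone via the sandwich $r(A)\leq r(A\cup y)\leq r(A\cup x\cup y)=r(A)$, and the reverse containment correctly invokes submodularity applied to $S=A\cup x$ and $T=A\cup y$, using $A\subseteq S\cap T$ to get the lower bound $r(S\cap T)\geq r(A)$ that makes the inequality close. One remark: in Oxley the result is usually derived from the closure axioms (monotonicity and idempotence of $cl$, e.g.\ $X\subseteq cl(Y)$ implies $cl(X)\subseteq cl(Y)$), which is marginally slicker but equivalent; your rank-theoretic route has the advantage of matching the definition of $cl$ actually given in this paper, so it is arguably the more appropriate proof in context. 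No gaps.
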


Let $cl$ and $cl'$ be the {\it closure operators} of $M$ and $M_X^e$, respectively. The following result characterizes the rank function of the matroid $M_X^e$ in terms of the rank function of the matroid $M$ (see \cite{HES, DMSES3C}).
\begin{lemma}\label{RMXE} Let $r$ and $r'$ be the rank functions of the matroids $M$ and  $M_X^e$, respectively. Suppose that $A \subseteq E(M)$. Then 
\begin{itemize}
\item[(1)] $ r'(A) = r(A) + 1,$ if $A$ contains an $OX$-circuit of the matroids $M$;\\
 $~~~~~~~~~~~~ = r(A)$; otherwise.
\item[(2)] $ r'(A\cup a) = r(A) + 1$;
\item[(3)] $ r'(A\cup \{\gamma\}) = r(A)$ if not $A$ but $A\cup \{e\}$ contains an $OX$-circuit of $M$;\\
$~~~~~~~~~~~~~~ ~~~~~~~= r(A) + 2$; if $A$ contains an $OX$-circuit of $M$ and $e \notin cl(A);$\\ 
$~~~~~~~~~~~~~~ ~~~~~~~= r(A) + 1$; otherwise 
\item[(4)] $r'(A\cup \{a, \gamma \}) = r(A) + 1 $ if $ e\in cl(A);$\\
$~~~~~~~~~~~~~~~~~~~~~~~~ = r(A) + 2 $ if $e \notin cl(A);$ 
\end{itemize}\end{lemma}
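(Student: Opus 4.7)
The plan is to work directly with the matrix representation $A^e_X$ from Definition \ref{dfes}: by definition of the vector matroid, $r'(B)$ equals the $GF(2)$-rank of the submatrix of $A^e_X$ on the columns indexed by $B$. The main tool is the duality ``a row vector lies in the row span of a matrix iff it is orthogonal to every vector in the kernel'', combined with the standard binary-matroid fact that every kernel vector of $A|_A$ has support equal to a disjoint union of circuits of $M$ contained in $A$; this converts rank computations into parity statements about circuits and the set $X$.

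For (1), the submatrix on $A$ is $A|_A$ with the row $\delta_X|_A$ appended, so its rank is $r(A)$ or $r(A)+1$ depending on whether $\delta_X|_A$ lies in the row span of $A|_A$. By the duality above, this membership is equivalent to $\delta_X|_A$ being orthogonal to every $v\in\ker(A|_A)$; since the inner product $v\cdot\delta_X|_A\equiv|\mathrm{supp}(v)\cap X|\pmod 2$ and $\mathrm{supp}(v)$ is a disjoint union of circuits of $M$ in $A$, orthogonality holds iff every circuit of $M$ contained in $A$ has even intersection with $X$, i.e., iff $A$ contains no OX-circuit. Part (2) follows at once: in the submatrix on $A\cup\{a\}$, column $a$ has its only nonzero entry in the last row, so it is linearly independent of the other columns restricted to the original rows, and a short case split on whether $\delta_X|_A$ is in the row span of $A|_A$ shows the rank equals $r(A)+1$ in either case.

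For (3), the submatrix on $A\cup\{\gamma\}$ equals $A|_A$ augmented by the column of $e$, with appended last row $(\delta_X|_A,0)$. Its rank is $r(A\cup e)$ or $r(A\cup e)+1$ according to whether $(\delta_X|_A,0)$ lies in the row span of the top block. I would analyze the kernel of the top block: a kernel vector $(w,c)$ satisfies either $c=0$ with $\mathrm{supp}(w)$ a disjoint union of circuits of $M$ in $A$, or $c=1$ with $\{e\}\cup\mathrm{supp}(w)$ a disjoint union of circuits of $M$ in $A\cup e$ containing $e$. Orthogonality with the last row then splits into two conditions: (a) no OX-circuit lies in $A$, and (b) every disjoint union of circuits in $A\cup e$ through $e$ has odd $X$-intersection. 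A short parity argument---the symmetric difference of any two circuits through $e$ is a cycle in $A$ which, under (a), has even $X$-intersection---shows that given (a), all circuits of $M$ through $e$ in $A\cup e$ share the same $X$-parity, whence (b) reduces to ``$A\cup e$ contains an OX-circuit''. Matching the resulting combinations of (a) and the OX-condition on $A\cup e$ against the values of $r(A\cup e)$ (which is $r(A)$ or $r(A)+1$ according as $e\in cl(A)$), together with the observation that any OX-circuit in $A\cup e$ but not in $A$ must contain $e$ and hence force $e\in cl(A)$, recovers the three-case formula. This parity bookkeeping is the main obstacle.

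Finally, for (4), using column operations to add column $a$ to every column of $A\cap X$ zeroes out the last-row entries of all columns of $A$ while leaving the top rows unchanged. Column $a$ is then independent of the modified columns (its only nonzero entry being in the last row), contributing $1$ to the rank, while the remaining top block represents $A\cup\{e\}$ in $M$ and has rank $r(A\cup e)$. Hence $r'(A\cup\{a,\gamma\})=r(A\cup e)+1$, which equals $r(A)+1$ when $e\in cl(A)$ and $r(A)+2$ otherwise, matching both cases of the statement.
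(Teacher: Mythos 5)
Your proposal is correct. Note, however, that the paper itself gives no proof of Lemma \ref{RMXE}: it is quoted as a known result from \cite{HES, DMSES3C}, so there is nothing in this paper to compare line by line. Your argument --- computing $r'$ as the $GF(2)$-rank of the relevant column submatrix of $A^e_X$, and testing whether the appended row $\delta_X$ lies in the row span of the top block via orthogonality to the cycle space, whose vectors are supported on disjoint unions of circuits --- is the standard route to such rank formulas in the splitting-operation literature, and all four parts check out: the parity bookkeeping in (3) does reduce, exactly as you say, to the observation that under condition (a) the symmetric difference of two cycles through $e$ lies in $A$ and is therefore $X$-even, so all cycles through $e$ in $A\cup e$ share one $X$-parity; and the column operation in (4) cleanly gives $r'(A\cup\{a,\gamma\})=r(A\cup e)+1$. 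The only point worth making explicit is the degenerate case $e\in A$ in part (3): there the $\gamma$-column duplicates the top part of the $e$-column, the kernel of the top block contains the vector supported on $\{e,\gamma\}$, and its inner product with $(\delta_X|_A,0)$ is $1$, so the appended row is never in the row span and $r'(A\cup\gamma)=r(A)+1$, which is consistent with the ``otherwise'' clause of the statement; your phrasing in terms of cycles through $e$ silently skips this empty-cycle case, so a sentence covering it would make the proof airtight.
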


The following result follows immediately from Lemma \ref{RMXE}.
\begin{corol} If $r$ and $r'$ denote the rank functions of $M$ and  $M_X^e$, respectively then $ r'(M_X^e) = r(M)+1$.\end{corol}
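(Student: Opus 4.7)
The plan is to derive this directly from Lemma \ref{RMXE}, which essentially contains the required rank computation as a special case. Since $E(M_X^e) = E(M) \cup \{a, \gamma\}$, the quantity $r'(M_X^e)$ is exactly $r'(E(M) \cup \{a, \gamma\})$, so the natural move is to apply part (4) of Lemma \ref{RMXE} with the choice $A = E(M)$.

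First I would observe that $e \in E(M)$, and the closure operator of any matroid satisfies $cl(E(M)) = E(M)$; in particular $e \in cl(E(M))$. This places us in the first sub-case of part (4) of Lemma \ref{RMXE}, giving
\begin{equation*}
r'\bigl(E(M) \cup \{a, \gamma\}\bigr) \;=\; r(E(M)) + 1.
\end{equation*}
Since $r(E(M)) = r(M)$ and $r'(E(M) \cup \{a, \gamma\}) = r'(M_X^e)$ by definition of the rank of a matroid, the stated identity $r'(M_X^e) = r(M) + 1$ follows at once.

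There is essentially no obstacle here: the corollary is genuinely just the ``$A = E(M)$'' specialization of Lemma \ref{RMXE}(4), and the only thing that needs to be pointed out is the trivial fact $e \in cl(E(M))$, which selects the correct branch of the piecewise formula. One might also sanity-check via Definition \ref{dfes}: the matrix $A_X^e$ is formed from $A$ by adjoining one extra row and two extra columns, and the new row $\delta_X$ is linearly independent from the rows of $A$ because it has a $1$ in the column labelled $a$ while every original row has a $0$ there; thus the row rank increases by exactly one, matching the conclusion.
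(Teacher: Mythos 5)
Your proof is correct and matches the paper's approach: the paper simply states that the corollary ``follows immediately from Lemma \ref{RMXE},'' and your specialization $A = E(M)$ in part (4), together with the trivial observation $e \in cl(E(M)) = E(M)$, is exactly the intended derivation.
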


\begin{prop} Let $M$ be a matroid on $E$, $X \subset E$, $C\in \mathcal C$$_{OX}$ and $C'\in \mathcal C$$_{EX}$ such that $e\in C \subseteq (A\cup e)$ and $ e\in C' \subseteq (A\cup e)$. Then there is a circuit $ C''\in \mathcal C$$_{OX}$ such that  $ C''\subseteq A$.\end{prop}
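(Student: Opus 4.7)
The plan is to exploit the binarity of $M$ (implicit from the paper's setting) via the symmetric difference of $C$ and $C'$, together with a parity count on the intersection with $X$.

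First I would observe that $C \neq C'$, since $|C \cap X|$ is odd while $|C' \cap X|$ is even. Because $e \in C \cap C'$, the symmetric difference $C \triangle C'$ does not contain $e$, and moreover
\[
C \triangle C' \;\subseteq\; (C \cup C') \setminus \{e\} \;\subseteq\; (A \cup e) \setminus \{e\} \;=\; A.
\]
Next I would invoke the standard fact for binary matroids that the symmetric difference of any two circuits is a disjoint union of circuits; write $C \triangle C' = C_1 \sqcup C_2 \sqcup \cdots \sqcup C_k$, with each $C_i$ a circuit of $M$ contained in $A$.

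The final step is a parity argument on $X$. Since
\[
|(C \triangle C') \cap X| \;=\; |C \cap X| + |C' \cap X| - 2\,|C \cap C' \cap X|,
\]
and $|C \cap X|$ is odd while $|C' \cap X|$ is even, this total is odd. Because $|(C \triangle C') \cap X| = \sum_{i=1}^{k} |C_i \cap X|$ and the sum is odd, at least one index $i$ must satisfy $|C_i \cap X|$ odd. Taking $C'' = C_i$ yields an $OX$-circuit contained in $A$, as required.

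The main obstacle is essentially notational rather than substantive: the hypothesis merely says ``matroid'', but the symmetric-difference-of-circuits decomposition requires $M$ to be binary. Since the entire paper works in the binary setting and the proposition is applied to $M$ (a binary matroid in Definition~\ref{dfes}), this is legitimate; I would simply state explicitly at the start of the proof that we use the binary-matroid identity that $C \triangle C'$ is a disjoint union of circuits. No appeal to circuit elimination alone would suffice, because plain circuit elimination only produces a circuit in $(C\cup C')\setminus e$ without any control over its parity with respect to $X$.
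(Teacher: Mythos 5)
Your proof is correct and follows essentially the same route as the paper's: the paper's own (one-line) argument simply asserts that the symmetric difference $C \,\Delta\, C'$ contains a circuit with an odd number of elements of $X$, which is exactly what you establish. Your version is in fact more complete, since you explicitly verify that $C \neq C'$, that $C \,\Delta\, C' \subseteq A$ because $e$ lies in both circuits, and that the parity count over the disjoint circuit decomposition forces at least one component to be an $OX$-circuit.
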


\begin{proof} $C\in \mathcal C$$_{OX}$ and $C'\in \mathcal C$$_{EX}$ are circuits of $M$ such that $e\in C \subseteq (A\cup e)$ and $ e\in C' \subseteq (A\cup e)$. Then the symmetric difference $ (C \Delta C')$ contains a circuit $C''$ containing an odd number of elements of $X$. \end{proof}

Let $M$ be a matroid on a set $E$ with rank function $r$, $X \subset E$ and $A\subseteq E$. Then for a subset $A$ of $E$ we define the sets $\mathcal T$ $(A)$ and $ \mathcal F$$(A)$ as follows.
\begin{enumerate}
\item $\mathcal T$ $(A)  =  \{x \in (E - A) ~|~ x\neq e$ and there is a circuit $C\in \mathcal C$$_{OX}$ of $M$ such that $x$, $e \in C$ and $C\subseteq (A\cup e) \cup x \}$.
\item $ \mathcal F$$(A) = \{ x \in (cl(A)- A) ~|~$ there is a circuit $C\in \mathcal C$$_{OX}$ but not in $C$$_{EX}$ such that $x\in C $ and $C \subset cl(A)$.
\end{enumerate}

We have the following proposition.
\begin{prop} If $e\in cl(A)$, then the set $\mathcal T$$(A)$$\subseteq cl(A)$.\end{prop}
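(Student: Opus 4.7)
The plan is to show directly that any $x\in\mathcal{T}(A)$ lies in $cl(A)$, by exhibiting a circuit of $M$ contained in $A\cup x$ that contains $x$; this suffices by the characterization of $cl$ recalled just before Lemma \ref{RMXE} (the ``$cl(A)=A\cup\{x\mid\ldots\}$'' proposition). So I fix $x\in\mathcal{T}(A)$ and unpack the definition: $x\in E-A$, $x\neq e$, and there exists a circuit $C\in\mathcal{C}_{OX}$ of $M$ with $x,e\in C$ and $C\subseteq A\cup\{e\}\cup x$.

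The argument splits into two easy cases. First, if $e\in A$, then already $C\subseteq A\cup x$, and $C$ is a circuit of $M$ containing $x$, so $x\in cl(A)$ is immediate. The substantive case is $e\notin A$. Here I use the hypothesis $e\in cl(A)$: by the same characterization of closure there is a circuit $C'$ of $M$ with $e\in C'\subseteq A\cup e$. Since $x\neq e$ and $x\notin A$, we have $x\notin C'$, and in particular $C\neq C'$.

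Now I apply the strong circuit elimination axiom to the distinct circuits $C$ and $C'$: the common element $e$ is to be removed, and $x\in C\setminus C'$ is the element to be preserved. This yields a circuit $C''$ of $M$ with
\[
x\in C''\subseteq (C\cup C')\setminus e\subseteq\bigl(A\cup\{e\}\cup x\bigr)\setminus e=A\cup x.
\]
Hence $x\in cl(A)$, as required.

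The only real point to be careful about is the bookkeeping in the second case: one must verify that $C\neq C'$ (supplied by $x\in C\setminus C'$) so that circuit elimination legitimately applies, and that $x$ is indeed in $C$ but not in $C'$ so it survives the elimination. Note that this argument does not even use that $C$ is an $OX$-circuit, only that it is a circuit containing both $e$ and $x$ and lying in $A\cup\{e,x\}$; the $OX$ hypothesis is carried along simply because it is part of the definition of $\mathcal{T}(A)$.
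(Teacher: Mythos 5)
Your proof is correct, but it takes a different route from the paper's. The paper argues in two lines: from the circuit $C$ with $x\in C\subseteq (A\cup e)\cup x$ it concludes $x\in cl(A\cup e)$, and then invokes the cited lemma that $e\in cl(A)$ implies $cl(A\cup e)=cl(A)$, so $x\in cl(A)$. You instead re-derive the needed instance of that lemma by hand: taking a circuit $C'$ with $e\in C'\subseteq A\cup e$ (available when $e\notin A$) and applying strong circuit elimination to $C$ and $C'$ at $e$, keeping $x$, to produce a circuit inside $A\cup x$ through $x$. Both arguments are sound; your case split and the verification that $x\in C\setminus C'$ (so elimination applies and $x$ survives) are handled correctly, and your closing observation that the $OX$ hypothesis on $C$ plays no role is accurate. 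What the paper's approach buys is brevity, since the closure lemma is already quoted from Oxley; what yours buys is self-containedness at the level of circuits, at the cost of invoking the \emph{strong} circuit elimination axiom, which the paper never states explicitly.
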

\begin{proof} Let $e\in cl(A)$ and $x\in $ $\mathcal T$$(A)$. Then there is a circuit $C$ of the matroid $M$ such that $x\in C $ and $C \subseteq A\cup \{e, x\}$. Consequently, $x\in cl(A\cup  e)$. Since $e\in cl(A)$, $ cl(A \cup  e) = cl(A)$. Thus $x\in cl(A)$. \end{proof}

\section{The Main Theorem}

Throughout this section, we assume that $M$ is a binary matroid on a set $E$ and $M^e_X$ is the splitting matroid of $M$ with respect to the subset $X$ of $E$ and $e\in X$. Further, $A'\subseteq E\cup \{a, \gamma \}$ and $A = A'\setminus \{a, \gamma\}$.

In the following Theorem we characterize the closure operator of the es-splitting matroid $M_X^e$ in terms of the closure operator of the matroid $M$. 
\begin{theorem} \label{thm} Let $M$ be a binary matroid on a set $E$ and $M^e_X$ be the splitting matroid of $M$ with respect to a subset $X$ of $E$ where $e\in X$. Suppose $A'\subseteq E\cup \{a, \gamma \}$ and $A = A'\setminus \{a, \gamma\}$. Then $cl'(A')$ is given by one of the sets $ cl(A) - $$\mathcal F$$(A)$, $ cl(A)$, $ cl(A) \cup a$, $(cl(A)- $ $\mathcal F$ $(A))$ $\cup$ $\gamma$, $(cl(A)- $ $\mathcal F$ $(A))$ $\cup$ $\gamma$ $\cup $ $\mathcal T$$(A)$, $ (cl(A)\cup \gamma)$ $\cup$ $\mathcal T$$(A)$ and 
$cl(A) \cup \{ a, e, \gamma \}$. \end{theorem}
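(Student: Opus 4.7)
The plan is to proceed by a case analysis on which of $a,\gamma$ lies in $A'$, and within each case to split further on whether $A$ contains an $OX$-circuit and whether $e\in cl(A)$. In every case $cl'(A')$ will be computed directly from the closure-via-circuits description (Proposition~2.2 in the text): an element $y\in(E\cup\{a,\gamma\})\setminus A'$ lies in $cl'(A')$ iff some circuit of $M_X^e$ is contained in $A'\cup y$ and passes through $y$. Since the circuits of $M_X^e$ are described in Proposition~\ref{CMXE} as $\mathcal C_0\cup\mathcal C_1\cup\mathcal C_2\cup\mathcal C_3\cup\{\Delta\}$, the analysis reduces in each case to asking which of these five families can supply such a witness.

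I would open with the base case $a,\gamma\notin A'$, so $A'=A\subseteq E$. Only $\mathcal C_0$ and $\mathcal C_1$ can contribute, because every member of $\mathcal C_2\cup\mathcal C_3\cup\{\Delta\}$ meets $\{a,\gamma\}$. The $\mathcal C_0$-witnesses yield exactly those elements of $cl(A)$ admitting an $EX$-circuit inside $A\cup y$, while a $\mathcal C_1$-witness requires a companion $OX$-circuit inside $A\cup y$. Combining these, the elements of $cl(A)\setminus A$ with no surviving witness are precisely those in $\mathcal F(A)$; rank information for $a$ and $\gamma$ is read off Lemma~\ref{RMXE}(2)--(3). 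Depending on whether $A$ or $A\cup e$ contains an $OX$-circuit, this base case produces the forms $cl(A)-\mathcal F(A)$ or $cl(A)$.

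Turning to the case $a\in A'$, $\gamma\notin A'$, I would invoke Lemma~\ref{RMXE}(2) to get $r'(A\cup a)=r(A)+1$; together with the observation that the circuits of type $\mathcal C_2$ provide an $a$-witness for every $OX$-circuit of $M$, this restores all of $cl(A)$, giving $cl'(A\cup a)\cap E=cl(A)$. Lemma~\ref{RMXE}(4) then decides whether $\gamma$ (and hence $e$, via the triangle $\Delta$) joins, producing $cl(A)\cup a$ or $cl(A)\cup\{a,e,\gamma\}$. Dually, the case $\gamma\in A'$, $a\notin A'$ is the most delicate: circuits of type $\mathcal C_3$, together with $\Delta$, create new witnesses built from $OX$-circuits through $e$, which is exactly what the set $\mathcal T(A)$ records. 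Splitting on the three alternatives of Lemma~\ref{RMXE}(3) yields $(cl(A)-\mathcal F(A))\cup\gamma$, $(cl(A)-\mathcal F(A))\cup\gamma\cup\mathcal T(A)$, and $(cl(A)\cup\gamma)\cup\mathcal T(A)$. The remaining case $a,\gamma\in A'$ is immediate: $\Delta=\{e,a,\gamma\}$ forces $e\in cl'(A')$, and the rank formulas collapse to $cl(A)\cup\{a,e,\gamma\}$.

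The main obstacle will be the $\gamma$-bookkeeping. I have to verify that the elements of $cl(A)\setminus A$ which fall out on passage from $cl(A)$ to $cl'(A')$ are exactly those in $\mathcal F(A)$: when $x$ has only $OX$-circuit witnesses inside $cl(A)$, the splitting destroys them without a $\mathcal C_1$-replacement, while the converse is shown by producing a matching $EX$-circuit witness from any $OX$-pair that survives. I also have to verify that every $y\in\mathcal T(A)$ indeed acquires a $\mathcal C_3$-witness of the form $(C\setminus e)\cup\{\gamma\}$ and hence joins $cl'(A'\cup\gamma)$. The symmetric-difference proposition proved just before the theorem will be used repeatedly to collapse an $OX$- and an $EX$-circuit through $e$ into a single $OX$-circuit avoiding $e$, a reduction that recurs across several subcases and is the key to matching the seven listed outputs to the seven combinations of hypotheses.
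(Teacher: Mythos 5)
Your plan is essentially the paper's own proof: the paper establishes the theorem through Lemmas \ref{l2}--\ref{l8}, each of which fixes $A'\cap\{a,\gamma\}$ together with the presence or absence of an $OX$-circuit in $A$, $A\cup e$, or $cl(A)$ and the condition $e\in cl(A)$, and then computes $cl'(A')$ element by element from the circuit description of $M_X^e$ in Proposition \ref{CMXE}, with $\mathcal F(A)$ accounting for the lost $OX$-witnesses and $\mathcal T(A)$ for the new $\mathcal C_3$-witnesses, exactly as you describe. The only cosmetic difference is that you lean on the rank formulas of Lemma \ref{RMXE} where the paper argues directly with circuits; just note that your base case $A'=A$ also yields $cl(A)\cup a$ and $cl(A)\cup\{a,e,\gamma\}$ (when $A$ contains an $OX$-circuit), not only $cl(A)-\mathcal F(A)$ and $cl(A)$.
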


The proof of Theorem \ref{thm} follows from Lemmas \ref{l2}, \ref{l3}, \ref{l4}, \ref{l5}, \ref{l6}, \ref{l7} and \ref{l8}.
\begin{lemma}\label{l2} If $A' = A $ and $(A\cup  e)$ contains no $OX$-circuit. Then $cl'(A') =  cl(A) - $$\mathcal F$$(A)$.\end{lemma}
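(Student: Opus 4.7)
The plan is to determine $cl'(A)$ element by element: for each $z \in E(M_X^e) \setminus A$, decide whether $r'(A \cup z) = r'(A)$ using Lemma~\ref{RMXE}, and then identify the elements of $cl(A) \setminus A$ that get excluded as precisely the set $\mathcal{F}(A)$.

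First I would observe that, since $A' = A$ and $A \subseteq A \cup e$ contains no $OX$-circuit, Lemma~\ref{RMXE}(1) gives $r'(A) = r(A)$. For the added element $a$, Lemma~\ref{RMXE}(2) yields $r'(A \cup a) = r(A) + 1 > r'(A)$, so $a \notin cl'(A)$. For $\gamma$, neither the first clause (requiring $A \cup e$ but not $A$ to contain an $OX$-circuit) nor the second (requiring $A$ to contain one) of Lemma~\ref{RMXE}(3) applies, so the ``otherwise'' case gives $r'(A \cup \gamma) = r(A) + 1 > r'(A)$; hence $\gamma \notin cl'(A)$ as well.

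Next, for $z \in E \setminus A$, I would invoke Lemma~\ref{RMXE}(1) applied to $A \cup z$. If $z \notin cl(A)$, then $r(A \cup z) = r(A) + 1$, so in either sub-case $r'(A \cup z) \geq r(A) + 1 > r'(A)$ and $z \notin cl'(A)$. If $z \in cl(A)$, then $r(A \cup z) = r(A)$, and $z \in cl'(A)$ iff $A \cup z$ contains no $OX$-circuit. This cuts the problem down to identifying which $z \in cl(A) \setminus A$ have the property that $A \cup z$ contains an $OX$-circuit.

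The principal technical step, which I expect to be the main obstacle, is matching $\{z \in cl(A) \setminus A : A \cup z \text{ contains an } OX\text{-circuit}\}$ with $\mathcal{F}(A)$. The key tool is a parity-invariance argument in the binary matroid $M$: since $A$ has no $OX$-circuit, any $OX$-circuit inside $A \cup z$ must contain $z$, and any two circuits $C$ and $C'$ through $z$ lying in $A \cup z$ satisfy $C \Delta C' \subseteq A$, a disjoint union of $EX$-circuits, forcing $|C \cap X| \equiv |C' \cap X| \pmod 2$. Hence ``$A \cup z$ contains an $OX$-circuit'' is an invariant of $z$, equal to the $X$-parity of the fundamental circuit of $z$ through $A$; and this fundamental circuit, lying inside $A \cup z \subseteq cl(A)$, is precisely the $OX$-circuit witnessing $z \in \mathcal{F}(A)$. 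Combining the three cases completes the proof that $cl'(A) = cl(A) \setminus \mathcal{F}(A)$.
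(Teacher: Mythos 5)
Your route is genuinely different from the paper's. The paper works directly with the circuit description of $M^e_X$ from Proposition \ref{CMXE}: for each candidate $x$ it asks which circuits $C'$ of $M^e_X$ can satisfy $x\in C'\subseteq A\cup x$, rules out $x=a$ and $x=\gamma$ by cases on the forms in $\mathcal C_1$, $\mathcal C_2$, $\mathcal C_3$, and exhibits an $EX$-circuit for the reverse inclusion. You instead run everything through Lemma \ref{RMXE}, and that reduction is correct and arguably cleaner: $r'(A)=r(A)$, $r'(A\cup a)=r(A)+1$ and $r'(A\cup\gamma)=r(A)+1$ dispose of $a$ and $\gamma$ in one line each, and for $z\in E\setminus A$ you arrive at the sharp statement that $cl'(A')= A\cup\{z\in cl(A)\setminus A : A\cup z \mbox{ contains no } OX\mbox{-circuit}\}$. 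Your parity-invariance observation (two circuits through $z$ inside $A\cup z$ have the same $X$-parity because their symmetric difference is a disjoint union of circuits lying in $A$, all necessarily $EX$) is exactly the content of the proposition on $C\Delta C'$ in Section 2, which the paper uses only implicitly.

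The gap is in the final step, where you match $\{z : A\cup z \mbox{ contains an } OX\mbox{-circuit}\}$ with $\mathcal F(A)$. You verify only one inclusion: if $A\cup z$ contains an $OX$-circuit, then the fundamental circuit of $z$ is such a circuit, lies in $cl(A)$, and witnesses $z\in\mathcal F(A)$. You never prove the converse, and with the definition of $\mathcal F(A)$ as literally written it is false: the witnessing circuit $C$ is only required to satisfy $z\in C\subset cl(A)$, not $C\subseteq A\cup z$, and an $OX$-circuit through $z$ that uses other elements of $cl(A)\setminus A$ need not force one inside $A\cup z$. Concretely, take $M$ graphic with a triangle $\{p,q,x\}$, an element $z$ parallel to $x$, $X=\{x,e\}$ with $e$ a coloop, and $A'=A=\{p,q\}$: then $\{x,z\}$ is an $OX$-circuit contained in $cl(A)$ and containing $z$, so $z\in\mathcal F(A)$ on the literal reading, yet the only circuit in $A\cup z$ is the $EX$-circuit $\{p,q,z\}$ and indeed $z\in cl'(A')$. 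The repair is to read (or restate) $\mathcal F(A)$ with the witness required to lie in $A\cup x$; under that reading your parity argument closes both directions at once, since all circuits through $z$ inside $A\cup z$ share one parity. The paper's own proof relies silently on the same reading at the step ``$C'$ is an $EX$-circuit, therefore $x\in cl(A)-\mathcal F(A)$'', so you should make the required form of the definition explicit rather than leave the equivalence as an assertion.
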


\begin{proof} Suppose, $A' = A $ and $(A\cup  e)$ contains no $OX$-circuit. Let $x \in cl'(A')$. Then $x \in A' = A$ or $x\in cl'(A')- A$. If $x \in A$,  then we are through. Now if $x\in cl'(A')- A'$, then there is a circuit $C'$ of $M^e_X$ such that $x\in C'$ and $C' \subseteq A'\cup  x = A \cup  x$. 

If $x\in E$, then by Proposition \ref{CMXE}, $C'$ is a circuit of $M$ or $C'$ is the union of two circuits $C_1$ and $C_2$, where $C_1$, $C_2 \in$ $\mathcal C$$_{OX}$. If $C'$ is a circuit of $M$, then $C'$ is an $EX$-circuit. Therefore, $x \in cl(A) - $$\mathcal F$$(A)$. On the other hand, if $C' = C_1 \cup C_2$, then without loss of generality assume that $x\in C_1$. Then $ C_1 \cup C_2 \subseteq A\cup  x$ implies that  $C_2 \subseteq A$, a contradiction.
	
If $x = a$, then $a\in C'$ and $C'$ is a circuit of $M^e_X$ contained in $A\cup a$. This implies that $C'$ = $C\cup  a$ where $C\in$ $\mathcal C$$_{OX}$. Thus, $a\in C' \subseteq A\cup a$ and hence $C \subseteq A$. This is a contradiction to the fact that $A$ contains no member of  $\mathcal C$$_{OX}$. Therefore, $x\neq a$.
	
If $x = \gamma$, then one of the following cases occurs.
\begin{itemize}
\item [(i)] $C' = C \cup \{e, \gamma\} \subseteq A\cup \gamma$ where $e\notin C$ and $C$ contains an odd number of elements of $X$ in $M$. Then $x\in C' = C \cup \{e, \gamma\}\subseteq A\cup \gamma$ and this implies that $C \subseteq A$ but this is a contradiction.
\item [(ii)] $C' = (C \setminus e )\cup \gamma \subseteq A\cup \gamma$ where $C$ contains an odd number of elements of $X$ and $e\in C$. It follows that $e\in C_{OX} \subseteq A \cup  e$; a contradiction.
\item [(iii)] $C' = (C\setminus e) \cup \{a, \gamma \} \subseteq A\cup \gamma$ where $(C\setminus e)$ contains an odd number of elements of $X$ and $e\in C$. This implies that $a\in A$ which is also a contradiction. Therefore, $x \neq \gamma$ and $cl'(A')\subseteq cl(A)- $$\mathcal F$$(A)$.
\end{itemize}

	Conversely, let $x\in cl(A)- $$\mathcal F$$(A)$. If $x \in A$, then there is nothing to prove. If $x \in (cl(A) - A)$ and $x \notin $ $\mathcal F$$(A)$, then there exists an $EX$-circuit $C\in$ $\mathcal C$$_{EX}$ of $M$ such that $x\in C$ and $C \subseteq A \cup  x$. Now $C' = C$ is a circuit of ${M_X^e}$ and $x\in C'\subseteq A \cup  x$. This implies that $ x\in cl'(A')$.  We conclude that $cl(A)- $$\mathcal F$$(A) \subseteq cl'(A')$.\end{proof} 

\noindent We illustrate the above Lemma with the help of following example. 
\begin{example}\label{exm1} Consider the matroid $M =M(G)$ corresponding to the graph shown in the Figure 2. Let $X = \{x, y\}$, $e = y$ and $M_{x,y}^e$ is the corresponding es-splitting matroid. \end{example}
\unitlength .8mm 
\linethickness{0.4pt}
\ifx\plotpoint\undefined\newsavebox{\plotpoint}\fi 
\begin{picture}(107.25,28)(00,00)
\put(7.25,4.25){\framebox(31.25,20.25)[cc]{}}
\put(7.25,24.5){\circle*{2}}
\put(38.5,24.25){\circle*{2}}
\put(7.25,4.5){\circle*{2}}
\put(38.5,4.25){\circle*{2}}
\put(21.75,14){\circle*{2}}
\multiput(7.5,24.5)(.0456730769,-.0336538462){312}{\line(1,0){.0456730769}}
\multiput(21.75,14)(.0579584775,-.0337370242){289}{\line(1,0){.0579584775}}
\multiput(7.25,4.5)(.0509090909,.0336363636){275}{\line(1,0){.0509090909}}
\multiput(21.25,13.75)(.0552884615,.0336538462){312}{\line(1,0){.0552884615}}
\put(50,4.5){\framebox(54.5,19.5)[cc]{}}
\put(50,24){\circle*{2}}
\put(50,4.5){\circle*{2}}
\put(104.5,24){\circle*{2}}
\put(104.5,4.5){\circle*{2}}
\put(66,14.25){\circle*{2}}
\put(85.5,14){\circle*{2}}
\multiput(50,23.75)(.0567375887,-.0336879433){282}{\line(1,0){.0567375887}}
\put(66,14.25){\line(1,0){19.25}}
\put(85.25,14.25){\line(2,1){19}}
\multiput(50,4.75)(.0567375887,.0336879433){282}{\line(1,0){.0567375887}}
\multiput(66,14.25)(.1323529412,-.0337370242){289}{\line(1,0){.1323529412}}
\multiput(104.25,4.5)(-.0657439446,.0337370242){289}{\line(-1,0){.0657439446}}
\put(21,26){4}
\put(3,14.5){1}
\put(19.75,4.50){2}
\put(40.75,14.25){3}
\put(12.5,16.5){5}
\put(12.25,10.75){6}
\put(27,19.75){$x$}
\put(25.75,8.5){$y$}
\put(75.25,26){4}
\put(47.25,14.75){1}
\put(74,4.50){2}
\put(107.25,15.25){3}
\put(55.75,16.5){5}
\put(55.5,10.5){6}
\put(91.75,19.75){$x$}
\put(93,12){$y$}
\put(75.75,16){$a$}
\put(77,7.5){$\gamma$}
\put(19.67,0.1){\makebox(0,0)[cc]{\bf {$M$}}}
\put(78.67,0.1){\makebox(0,0)[cc]{\bf {$M_{x,y}^e$}}}
\put(50.67,00){\makebox(0,0)[cc]{\bf {Figure 2}}}
\end{picture}
\vskip.2in
  
Let $A' = \{4, 5\}$. Then $A' = A$, $cl(A) = \{4, 5, x\}$, $\mathcal F$$(A)= \{x\}$ and $cl'(A') = \{4, 5\} = cl(A) - $$\mathcal F$$(A)$.

\begin{lemma}\label{l3} Suppose that $A' = A$ and $cl(A)$ contains no $OX$-circuit. Then $cl'(A') = cl(A)$. \end{lemma}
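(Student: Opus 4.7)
The plan is to establish both inclusions $cl'(A') \subseteq cl(A)$ and $cl(A) \subseteq cl'(A')$, running a case analysis that closely mirrors the proof of Lemma \ref{l2}. I first observe that the present hypothesis is in fact stronger than the hypothesis of Lemma \ref{l2}: any $OX$-circuit $C \subseteq A \cup e$ must already lie in $cl(A)$ (if $e \notin C$ then $C \subseteq A \subseteq cl(A)$; if $e \in C$ then $C \subseteq A \cup e$ already witnesses $e \in cl(A)$, so again $C \subseteq cl(A)$), contradicting our assumption. One could therefore invoke Lemma \ref{l2} to obtain $cl'(A') = cl(A) - \mathcal F(A)$, and then note that $\mathcal F(A) = \emptyset$ because any member of $\mathcal F(A)$ would by definition supply an $OX$-circuit inside $cl(A)$. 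I would, however, give a direct proof modelled on Lemma \ref{l2} for uniformity with the other lemmas of this section.

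For the forward inclusion $cl'(A') \subseteq cl(A)$, fix $x \in cl'(A')$; if $x \in A$ we are done, so assume there is a circuit $C'$ of $M^e_X$ with $x \in C' \subseteq A \cup x$. I split into cases on whether $x \in E$, $x = a$, or $x = \gamma$, using Proposition \ref{CMXE} to describe $C'$. When $x \in E$: either $C' \in \mathcal C_0$ is an $EX$-circuit of $M$, directly yielding $x \in cl(A)$; or $C' = C_1 \cup C_2 \in \mathcal C_1$ with $x \in C_1$, forcing $C_2 \subseteq A$ and producing an $OX$-circuit in $cl(A)$, contradicting the hypothesis. When $x = a$: the only possibility is $C' = C \cup a$ with $C \in \mathcal C_{OX}$ and $C \subseteq A$, again a forbidden $OX$-circuit inside $cl(A)$. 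When $x = \gamma$: each of the three forms of circuits in $\mathcal C_3$ either places an $OX$-circuit of $M$ in $A$, or in $A \cup e$ (thereby forcing $e \in cl(A)$, so the circuit lies in $cl(A)$), or demands $a \in A$, all of which are impossible.

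For the reverse inclusion $cl(A) \subseteq cl'(A')$, take $x \in cl(A)$; we may assume $x \notin A$, so pick an $M$-circuit $C$ with $x \in C \subseteq A \cup x \subseteq cl(A)$. Since $cl(A)$ contains no $OX$-circuit, $C$ is an $EX$-circuit; hence $C \in \mathcal C_0 \subseteq \mathcal C'$ is also a circuit of $M^e_X$, witnessing $x \in cl'(A')$.

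The only point requiring real care is the middle $\mathcal C_3$-subcase $C' = (C \setminus e) \cup \gamma$ with $e \in C \in \mathcal C_{OX}$: here $C \setminus e \subseteq A$ does not immediately put $C$ inside $cl(A)$, but $C$ is an $M$-circuit through $e$ contained in $A \cup e$, so $e \in cl(A)$ and therefore $C \subseteq A \cup e \subseteq cl(A)$, delivering the required contradiction. Every other subcase is a straightforward application of the circuit list in Proposition \ref{CMXE}.
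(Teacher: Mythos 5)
Your proof is correct and follows essentially the same route as the paper's: both inclusions via the circuit description of Proposition \ref{CMXE}, with the same case split on $x\in E$, $x=a$, $x=\gamma$. Your treatment of the subcase $C'=(C\setminus e)\cup\gamma$ is in fact slightly more careful than the paper's (which declares a contradiction ``to the fact that $e\notin cl(A)$'' even though that is not a hypothesis of this lemma, whereas you correctly route the contradiction through an $OX$-circuit lying inside $cl(A)$), and your observation that the statement also follows from Lemma \ref{l2} together with $\mathcal F(A)=\emptyset$ is a valid shortcut the paper does not take.
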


\begin{proof} Suppose, $A' = A$ and $cl(A)$ contains no $OX$-circuit. If $x \in cl'(A')$, then $x \in A'$ or $x\in cl'(A')- A'$. If $x \in A'$, then we are through. Now suppose $x\in cl'(A')- A'$ and let $C'$ be a circuit of $M^e_X$ such that $x\in C' \subseteq A'\cup  x$.

If $x\in E$, then by Proposition \ref{CMXE}, $C'$ is a circuit of $M$ or $C'$ is the union of two circuits $C_1$ and $C_2$, belonging to the set $\mathcal C$$_{OX}$. If $C'$ is a circuit of $M$, then $C'$ is an $EX$-circuit. Therefore, $x \in cl(A)$. On the other hand, if $C' = C_1 \cup C_2$, then without loss of generality assume that $x\in C_1$. Then $ C_1 \cup C_2 \subseteq A\cup  x$ implies that  $C_2 \subseteq A$, a contradiction.

If $x = a$, then $a\in C' \subseteq A \cup a$ in $M^e_X$ and this implies that $C'$ = $C\cup  a$ where $C\in$ $\mathcal C$$_{OX}$ is a circuit of $M$. But $a\in C' \subseteq A \cup a$ implies that $C \subseteq A$ and this is a contradiction to the fact that $A$ contains no member of $\mathcal C$$_{OX}$. Therefore, $x\neq a$.
	
If $x = \gamma$, then $\gamma \in C'$ and $C' \subseteq A \cup \gamma$. It follows that $C'$ has one of the following three types of forms.
\begin{itemize}
\item [(i)] $C' = C \cup \{e, \gamma\} \subseteq A \cup \gamma$ where $e\notin C$ and $C\in$ $\mathcal C$$_{OX}$ is a circuit of $M$. Then $C \subseteq A $ and we get a contradiction.
\item [(ii)] $C' = (C \setminus e )\cup \gamma \subseteq A \cup \gamma$ where $C\in$ $\mathcal C$$_{OX}$ is a circuit of $M$ and $e\in C$. Consequently,  $C \subseteq A \cup  e$ and $e\in cl(A)$. This is a contradiction to the fact that $e\notin cl(A)$.
\item [(iii)] $C' = (C\setminus e) \cup \{a, \gamma \} \subseteq A \cup \gamma$ where $(C\setminus e) \in$ $\mathcal C$$_{OX}$ is a circuit of $M$ and $e\in C$. We conclude that $C\subseteq A \cup  e$ and hence $e\in cl(A)$, a contradiction.
\end{itemize}

 Conversely, let $x\in cl(A)$. If $x\in A$, then we are through. If $x\in cl(A) - A $, then there is a circuit $C$ of $M$ such that $x\in C\subseteq A\cup  x$. As $cl(A)$ contains no member of $\mathcal C$$_{OX}$, $C$ contains an even number of elements of $X$. Thus $C$ is also a circuit of $M_X^e$. Thus, $x\in cl'(A')$. This completes the proof of the Lemma. \end{proof}


In order to elaborate the above Lemma, consider $A' = \{1, 5\}$ in Example \ref{exm1}. Then $A' = A$, $cl(A) = \{1, 5, 6\}$ and $cl'(A') = \{1, 5, 6\}$.

\begin{lemma}\label{l4} Suppose $e \notin cl(A)$ and one of the following conditions is true.
\begin{enumerate}
		\item $A' = A$ and $A$ contains an $OX$-circuit.
		\item $A' = A\cup a$.
\end{enumerate} Then $cl'(A') =  cl(A) \cup a$.  \end{lemma}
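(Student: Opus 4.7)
My plan is to verify $cl'(A') = cl(A) \cup \{a\}$ using the rank characterization of closure---namely, $x \in cl'(A')$ iff $r'(A' \cup x) = r'(A')$---together with Lemma~\ref{RMXE}. This bypasses the circuit enumeration used in the proofs of Lemmas~\ref{l2} and \ref{l3} and lets the answer be read directly off the rank formulas. First I would observe that under both hypotheses $r'(A') = r(A) + 1$: in case~(1) this is Lemma~\ref{RMXE}(1) applied to $A' = A$ since $A$ contains an $OX$-circuit, and in case~(2) it is Lemma~\ref{RMXE}(2) applied to $A' = A \cup a$.

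Next I would test each candidate element $x$ outside $A'$ for closure membership. For $x \in E \setminus A$: in case~(1) the set $A \cup x$ still contains the given $OX$-circuit of $A$, so Lemma~\ref{RMXE}(1) yields $r'(A \cup x) = r(A \cup x) + 1$; in case~(2), Lemma~\ref{RMXE}(2) yields $r'(A \cup a \cup x) = r(A \cup x) + 1$. Either way, $r'(A' \cup x) = r'(A')$ iff $r(A \cup x) = r(A)$ iff $x \in cl(A)$, which determines the $E$-part of $cl'(A')$ exactly. For $x = a$: case~(2) is immediate since $a \in A'$, and case~(1) follows from Lemma~\ref{RMXE}(2), giving $r'(A \cup a) = r(A) + 1 = r'(A')$. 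For $x = \gamma$: the hypothesis $e \notin cl(A)$ is precisely what forces $r'(A' \cup \gamma) = r(A) + 2$. In case~(1), since $A$ contains an $OX$-circuit and $e \notin cl(A)$, we land in the second subcase of Lemma~\ref{RMXE}(3); in case~(2), Lemma~\ref{RMXE}(4) applied to $A \cup \{a, \gamma\}$ with $e \notin cl(A)$ gives the same value. In both cases this strictly exceeds $r'(A') = r(A) + 1$, so $\gamma \notin cl'(A')$.

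The main obstacle is the case analysis inside Lemma~\ref{RMXE}(3): one must confirm that in case~(1) we really do land in the second subcase. The first subcase (``$A$ does not, but $A \cup e$ does, contain an $OX$-circuit'') is ruled out by the standing hypothesis that $A$ itself contains an $OX$-circuit, and the combination of this hypothesis with $e \notin cl(A)$ then selects the $r(A) + 2$ branch unambiguously over the ``otherwise'' branch. Once this subcase is correctly identified, the rest of the argument is straightforward arithmetic on ranks, and the two hypotheses $(1)$ and $(2)$ funnel into the same conclusion $cl'(A') = cl(A) \cup \{a\}$.
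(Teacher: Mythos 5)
Your proof is correct, but it takes a genuinely different route from the paper's. The paper proves Lemma \ref{l4} via the circuit characterization of closure: for $x \in cl'(A')\setminus A'$ it picks a circuit $C'$ of $M^e_X$ with $x \in C' \subseteq A'\cup x$, runs through the circuit types of Proposition \ref{CMXE} separately for $x \in E$, $x = a$ and $x = \gamma$, and then proves the reverse inclusion by exhibiting explicit circuits ($C$ or $C \cup a$) of $M^e_X$. You instead work straight from the paper's definition of closure in terms of the rank function and read everything off Lemma \ref{RMXE}: $r'(A') = r(A)+1$ in both cases, $r'(A'\cup x) = r(A\cup x)+1$ for $x \in E$ (so the $E$-part of $cl'(A')$ is exactly $cl(A)$), $r'(A'\cup a) = r(A)+1$ so $a$ is absorbed, and $r'(A'\cup\gamma) = r(A)+2$ by the second branch of part (3) in case (1) and by part (4) in case (2), the hypothesis $e \notin cl(A)$ being precisely what selects the $+2$ branch and excludes $\gamma$. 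Your identification of the correct subcase of Lemma \ref{RMXE}(3) is the one delicate point, and you handle it correctly. What the rank route buys is brevity, a uniform treatment of $x=e$ alongside the other elements of $E$, and the avoidance of the $\mathcal{C}_1$-type circuits $C_1\cup C_2$, which the paper must dispose of by a separate contradiction; what it costs is that the whole argument now rests on Lemma \ref{RMXE} (itself proved by circuit analysis) and produces no explicit witnessing circuits of the kind the paper reuses in the later lemmas where $\mathcal{T}(A)$ and $\mathcal{F}(A)$ enter. Both arguments treat hypotheses (1) and (2) uniformly and arrive at $cl'(A') = cl(A)\cup a$.
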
 

\begin{proof} Suppose $e \notin cl(A)$, $A' =A$ and  there is an $OX$-circuit in $A$. Let $x\in cl'(A')- A'$ and $C'$ be a circuit of $M^e_X$ such that $x\in C' \subseteq A'\cup  x$.

If $x\in E$, then by Proposition \ref{CMXE}, $C'$ is a circuit of $M$ or $C'$ is the union of two circuits $C_1$ and $C_2$, from the set $\mathcal C$$_{OX}$. If $C'$ is a circuit of $M$, then $C'$ is an $EX$-circuit. Therefore, $x \in cl(A)$. On the other hand if $C' = C_1 \cup C_2$, then without loss of generality, assume that $x\in C_1$ and $C_1 \subseteq A\cup  x$. Consequently, $x\in cl(A)$.

	If $x = a$, then $x \in cl(A) \cup a$ and if $x = \gamma$, then $\gamma \in C' \subseteq A\cup \gamma$ and $e\notin cl(A)$. This implies that $C'$ has one of the following three forms.
\begin{itemize}
\item [(i)] $C' = C \cup \{e, \gamma\} \subseteq A\cup \gamma$ where $e\notin C$ and $C\in$ $\mathcal C$$_{OX}$ is a circuit of $M$. Then $x\in C' = C \cup \{e, \gamma\}\subseteq A\cup \gamma$. This implies that $e\in A$, a contradiction.
\item [(ii)] $C' = (C \setminus e )\cup \gamma \subseteq A \cup \gamma$ where $C\in$ $\mathcal C$$_{OX}$ is a circuit of $M$ and $e\in C$. Then $e\in cl(A)$ and this leads to a contradiction.
\item [(iii)] $C' = (C\setminus e) \cup \{a, \gamma \} \subseteq A \cup \gamma$ where $(C\setminus e) \in$ $\mathcal C$$_{OX}$ is a circuit of $M$ and $e\in C$. We conclude that $e\in cl(A)$, $a\in A$ and this leads to a contradiction. Therefore, $x \neq \gamma$ and $cl'(A')\subseteq cl(A)\cup  a$.
\end{itemize}

	Conversely, let $x\in cl(A)\cup  a$. If $x = a$, then $a\in cl'(A')$ as $A$ contains an element $C$ of $\mathcal C$$_{OX}$. Moreover, $a\in C' = C \cup  a\subseteq A\cup a$ in ${M^e_X}$.
	If $x\in A$, then we are through. Suppose $x\in cl(A)- A$ and let $C$ be a circuit of $M$ contained in $X$ such that $x\in C\subseteq A\cup  x$. If $C\in$ $\mathcal C$$_{EX}$ is a circuit of $M$, then $C' = C$; otherwise $C' = C\cup  a$ is a circuit of ${M^e_X}$. Further, $x\in C$ implies that $x\in cl'(A\cup a)$ whereas $a\in cl'(A')$ implies $ cl'(A\cup a) = cl'(A')$. Thus $x\in cl'(A')$ and $cl(A) \cup  a \subseteq cl'(A')$, as desired. Part (2) follows by argument similar to one as given in part (1).
\end{proof} 

To illustrate the above Lemma, let $A' = \{1, 4, 6, x\}$ in Example \ref{exm1}. Then $A' = A$, $cl(A) = \{1, 4, 6, x\}$ and $cl'(A') = \{1, 4, 6, x, a\}$. And if $A' = \{1, 6, a\}$ then $A = \{1, 6\}$, $cl(A) = \{1, 5, 6\}$ and $cl'(A') = \{1, 5, 6, a\}$.

\begin{lemma}\label{l5} Let $A' = A$ and $A \cup  e$ contains an $OX$-circuit but $A$ contains no $OX$-circuit. Then $cl'(A') = (cl(A)-$$\mathcal F$$(A))$ $\cup$ $\gamma $.\end{lemma}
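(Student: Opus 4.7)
The plan is to establish the two inclusions $cl'(A) \subseteq (cl(A)-\mathcal{F}(A)) \cup \gamma$ and $(cl(A)-\mathcal{F}(A)) \cup \gamma \subseteq cl'(A)$ separately, modelled on the template used in Lemmas \ref{l2} and \ref{l3}. Throughout, I keep in mind that $A' = A$ means $a, \gamma \notin A$, and I repeatedly invoke Proposition \ref{CMXE} to classify circuits of $M_X^e$ into the families $\mathcal{C}_0, \mathcal{C}_1, \mathcal{C}_2, \mathcal{C}_3, \{\Delta\}$.

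For the forward inclusion, I take $x \in cl'(A) \setminus A$ and a circuit $C'$ of $M_X^e$ with $x \in C' \subseteq A \cup x$, then split on whether $x \in E$, $x = a$, or $x = \gamma$. When $x \in E$, the circuit $C'$ lies in $E$, so it is either an $EX$-circuit of $M$ (placing $x$ in $cl(A) \setminus \mathcal{F}(A)$) or a minimal union $C_1 \cup C_2$ of two disjoint $OX$-circuits; in the latter case, the piece not containing $x$ lies entirely in $A$, contradicting the hypothesis that $A$ has no $OX$-circuit. When $x = a$, the only admissible form is $C' = C \cup a$ with $C \in \mathcal{C}_{OX}$ and $C \subseteq A$, again contradicting that hypothesis. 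The case $x = \gamma$ is immediate, since $\gamma$ lies in the right-hand side by construction.

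For the reverse inclusion, the decisive step is the production of $\gamma \in cl'(A)$. By hypothesis there exists an $OX$-circuit $C_0 \subseteq A \cup e$ of $M$, and since $A$ itself carries no $OX$-circuit, necessarily $e \in C_0$, so $C_0 \setminus e \subseteq A$. Proposition \ref{CMXE} then delivers the $M_X^e$-circuit $(C_0 \setminus e) \cup \gamma$ of type $\mathcal{C}_3$ inside $A \cup \gamma$, giving $\gamma \in cl'(A)$. For the remaining $x \in cl(A) \setminus \mathcal{F}(A)$ with $x \notin A$, the assumption $x \notin \mathcal{F}(A)$ produces an $EX$-circuit $C$ of $M$ with $x \in C \subseteq A \cup x$, and this $C$ is still a circuit of $M_X^e$ (a member of $\mathcal{C}_0$), so $x \in cl'(A)$. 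The main subtlety is the asymmetric treatment of $a$ and $\gamma$: bringing either into $cl'(A)$ demands an $OX$-circuit, but only $A \cup e$ (and not $A$) carries one, so both halves of the hypothesis are genuinely used — ruling out $a$ via the failure of $A$ to contain an $OX$-circuit, and admitting $\gamma$ via the $OX$-circuit $C_0$ in $A \cup e$ together with the $\mathcal{C}_3$-clause of Proposition \ref{CMXE}.
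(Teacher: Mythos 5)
Your proposal is correct and follows essentially the same route as the paper's proof: the forward inclusion by classifying the circuit $C'$ of $M_X^e$ according to Proposition \ref{CMXE} and ruling out $x=a$ (and the $\mathcal C_1$-type unions) using the absence of an $OX$-circuit in $A$, and the reverse inclusion by exhibiting the $\mathcal C_3$-circuit $(C_0\setminus e)\cup\gamma$ for $\gamma$ and an $EX$-circuit for each $x\in cl(A)-\mathcal F(A)$. Your explicit observation that $e\in C_0$ (hence $C_0\setminus e\subseteq A$) because $A$ itself carries no $OX$-circuit makes precise a step the paper leaves implicit, but the argument is the same.
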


\begin{proof} Suppose $A'= A$ and $A \cup  e$ contains an $OX$-circuit but $A$ contains no $OX$-circuit. Let $x \in cl'(A')$. If $x \in A'$  or $x = \gamma $, then we are through. Suppose that $x\in cl'(A')- A'$. Then there is a circuit $C'$ of $M^e_X$ such that $x\in C' \subseteq A'\cup  x $. 

If $x\in E$, then by Proposition \ref{CMXE}, $C'$ is a circuit of $M$ or $C'$ is the union of two circuits $C_1$ and $C_2$, from the set $\mathcal C$$_{OX}$. If $C'$ is a circuit of $M$, then $C'$ is an $EX$-circuit. Therefore, $x \in cl(A)-$$\mathcal F$$(A)$. On the other hand, if $C' = C_1 \cup C_2$, then without loss of generality assume that $x\in C_1$. Then $ C_1 \cup C_2 \subseteq A\cup  x$ implies that  $C_2 \subseteq A$, a contradiction.

If $x = a$, then there is a circuit $C'$ of $M^e_X$ such that $a\in C' \subseteq A \cup a$. This implies that $C'$ = $C\cup  a$ for some $C\in$ $\mathcal C$$_{OX}$. Consequently, $C \subseteq A$. This is a contradiction to the fact that $A$ contains no member of  $\mathcal C$$_{OX}$. Therefore, $x\neq a$.
	
	Conversely, let $x\in cl(A)- $ $(\mathcal F$$(A)$$ \cup \gamma)$. If $x\in (cl(A)- A) -$ $\mathcal F$$(A)$, then there is a circuit $C\in$ $\mathcal C$$_{EX}$ such that $x\in C\subseteq A \cup  x = A' \cup  x$. This implies $ x\in cl'(A')$. If $x = \gamma$ and there is a member $C$ of $\mathcal C$$_{OX}$ such that $e\in C \subseteq A \cup  e$, then $C'$ = $(C\setminus e)\cup \gamma$ is a circuit of $M^e_X$ contained in $A\cup \gamma = A'\cup \gamma$. Therefore, $\gamma\in cl'(A')$. Thus, we conclude that $(cl(A) - $$\mathcal F$$(A)) \cup \gamma \subseteq cl'(A')$. This completes the proof. \end{proof}


To ellaborate the above Lemma, let $A = \{2, 6\}$ in Example \ref{exm1}. Then $A' = A$, $cl(A) = \{2, 6, y\}$, $\mathcal F$$(A)= \{y\}$ and $cl'(A') = \{2, 6, \gamma\} = (cl(A)-$$\mathcal F$$(A))$ $\cup$ $\gamma$.

\begin{lemma}\label{l6} Let $A' = A \cup \gamma$, $e \notin cl(A)$ and $cl(A)$ contains an $OX$-circuit but $A$ contains no $OX$-circuit. Then $cl'(A') =  (cl(A)- $ $\mathcal F$$(A))$ $\cup$ $\gamma$ $\cup $ $\mathcal T$$(A)$.\end{lemma}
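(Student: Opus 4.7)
The plan is to verify the identity by proving both inclusions, mirroring the case-analysis style of Lemmas \ref{l2}--\ref{l5}. The structural tool throughout will be Proposition \ref{CMXE}, which enumerates the circuits of $M_X^e$ as $\mathcal{C}_0 \cup \mathcal{C}_1 \cup \mathcal{C}_2 \cup \mathcal{C}_3 \cup \{\Delta\}$; the hypotheses ``$A$ contains no $OX$-circuit'' and ``$e \notin cl(A)$'' will do the heavy lifting in eliminating cases.

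For the inclusion $(cl(A) - \mathcal{F}(A)) \cup \{\gamma\} \cup \mathcal{T}(A) \subseteq cl'(A')$, note that $\gamma \in A' \subseteq cl'(A')$. If $x \in (cl(A) - \mathcal{F}(A)) \setminus A$, then by the characterization of $\mathcal{F}(A)$ used in the converse direction of Lemma \ref{l2} there is an $EX$-circuit $C$ with $x \in C \subseteq A \cup x$; this $C$ lies in $\mathcal{C}_0$ and remains a circuit of $M_X^e$, so $x \in cl'(A) \subseteq cl'(A')$. Finally, if $x \in \mathcal{T}(A)$, then some $C \in \mathcal{C}_{OX}$ has $x, e \in C \subseteq A \cup \{e, x\}$, and then $(C \setminus e) \cup \{\gamma\} \in \mathcal{C}_3$ contains $x$ and is contained in $A \cup \{\gamma, x\} = A' \cup x$, yielding $x \in cl'(A')$.

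For the reverse inclusion, take $x \in cl'(A') - A'$ together with a circuit $C'$ of $M_X^e$ satisfying $x \in C' \subseteq A \cup \{\gamma, x\}$. Walking through Proposition \ref{CMXE}: the case $x = a$ dies because $\Delta \subseteq A' \cup a$ and the third branch of $\mathcal{C}_3$ both force $e \in cl(A)$, while $\mathcal{C}_2$ would place an $OX$-circuit inside $A$; similarly, $x = e$ would require an $OX$-circuit in $A$ (via $\mathcal{C}_1$ or the first branch of $\mathcal{C}_3$) or $e \in cl(A)$ (via $\mathcal{C}_0$). For $x \in E \setminus (A \cup \{a, e, \gamma\})$, the branches $\mathcal{C}_1$, $\mathcal{C}_2$, and the first and third branches of $\mathcal{C}_3$ are again ruled out by the same two hypotheses; the only surviving options are $C' \in \mathcal{C}_0$, in which case $C'$ is an $EX$-circuit of $M$ with $x \in C' \subseteq A \cup x$ and so $x \in cl(A) - \mathcal{F}(A)$, and $C' = (C \setminus e) \cup \{\gamma\}$ from the middle branch of $\mathcal{C}_3$ with $C \in \mathcal{C}_{OX}$ and $e \in C$, in which case $C \setminus \{e, x\} \subseteq A$ gives $C \subseteq A \cup \{e, x\}$ and certifies $x \in \mathcal{T}(A)$.

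The main obstacle is the systematic bookkeeping across the seven circuit sub-families of $\mathcal{C}_0, \mathcal{C}_1, \mathcal{C}_2, \mathcal{C}_3, \{\Delta\}$. One must verify that exactly $\mathcal{C}_0$ and the middle branch of $\mathcal{C}_3$ survive the hypotheses and contribute, respectively, the $cl(A) - \mathcal{F}(A)$ and $\mathcal{T}(A)$ pieces, while $\gamma$ enters trivially from $A'$. The definition of $\mathcal{F}(A)$ is the subtle ingredient: the elements it excises from $cl(A)$ are precisely those whose only certificate for belonging to $cl(A)$ is an $OX$-circuit, and such a certificate is destroyed on passing to $M_X^e$ unless it is first converted, via $\gamma$, into the $(C \setminus e) \cup \gamma$ form.
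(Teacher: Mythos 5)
Your proposal is correct and follows essentially the same route as the paper's own proof: both directions are handled by running through the circuit families of Proposition \ref{CMXE}, with the hypotheses ``$A$ contains no $OX$-circuit'' and ``$e \notin cl(A)$'' eliminating every branch except $\mathcal{C}_0$ (yielding $cl(A)-\mathcal{F}(A)$) and the $(C\setminus e)\cup\gamma$ branch of $\mathcal{C}_3$ (yielding $\mathcal{T}(A)$), while $\gamma$ enters trivially from $A'$. The only cosmetic difference is that you organize the case split by the location of $x$ (whether $x=a$, $x=e$, or $x\in E$) whereas the paper splits first on whether $\gamma\in C'$; the content is identical.
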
	

\begin{proof} Let $x\in cl'(A') - (A')$. Then there is a circuit say $C'$ of ${M_X^e}$ such that $x\in C'\subseteq A'\cup  x = A\cup \{\gamma, x\}$.

If $\gamma\in C'$, then $C' \subseteq A \cup \{\gamma, x\}$ and one of the following four cases occurs.
\begin{itemize}
\item [(i)] $C' = C\cup \{e, \gamma\}$ where $C$ is a member of $\mathcal C$$_{OX}$ and $e\notin C$. Then $x\in C' =  C\cup \{e, \gamma\} \subseteq A\cup \{\gamma, x\}$ and this implies that $ x\in (C\cup  e) \subseteq A \cup  x$. Consequently, $ x = e\in cl(A)$ and $C\subseteq A$ but this is a contradiction. 
\item	[(ii)] $C' = (C \setminus e)\cup \{a, \gamma\}$ where $C$ is a circuit of $M$ containing $e$ and $C\setminus e$ is a member of $\mathcal C$$_{OX}$. Then $x\in C' =  (C\setminus e )\cup \{a, \gamma\}$ and $C' \subseteq A\cup \{\gamma, x\}$  therefore, $(C\setminus e) \cup  a \subseteq A\cup  x$. Consequently, $x = a$ and $(C\setminus e) \subseteq A$. As $C \subseteq A \cup  e$, it follows that $C$ is a member of $\mathcal C$$_{EX}$ contained in $cl(A)$. This is a contradiction to the fact that $e\notin cl(A)$.
\item	[(iii)] $C' = (C \setminus e)\cup \gamma$ where $C$ is a circuit of $M$ containing $e$ and $C\in$ $\mathcal C$$_{OX}$. Then $x\in C' =  (C\setminus e )\cup \gamma  \subseteq A\cup \{\gamma, x\}$ and hence $x\in C\setminus e \subseteq A\cup  x$. Thus, $x\in C\subseteq A \cup \{e, x\}$ and $x\in $ $\mathcal T$$(A)$ as desired.
\item	[(iv)] $C' = \{a, e, \gamma\}$. Then $x\in C' = \{a, e, \gamma\} \subseteq A\cup \{\gamma, x\}$ and $x = a$ or $x = e$. In either case, we get a contradiction.
\end{itemize}

	If $\gamma \notin C'$, then $C' \subseteq A\cup  x$ and one of the  following two cases occurs.
\begin{itemize}
\item [(i)] $C'$ is a circuit of  ${M_X^e}$ containing an even number of elements of $X$. Then $C' = C$ or $C' = C_1\cup C_2$ where $C$, $C_1$, $C_2\in$ $\mathcal C$$_{OX}$. Then by argument similar to one as in the proof of Lemma \ref{l2}, we conclude that $x\in cl(A)- $ $\mathcal F$$(A)$. 
\item	[(ii)] $C'$ is a circuit of ${M_X^e}$ containing an odd number of elements of $X$. Then $C' = C \cup  a \subseteq A\cup  x$ where $C\in$ $\mathcal C$$_{OX}$. This implies that $x = a$ and $C \subseteq A$, a contradiction. Therefore, $C'$ contains an even number of elements of $X$. \end{itemize}

Consequently, $cl'(A) \subseteq (cl(A) - $ $\mathcal F$$(A)$) $\cup $ $\gamma$ $\cup $ $ \mathcal T$$(A)$.
	
	Conversely, let $x\in (cl(A) - $ $\mathcal F$$(A)) \cup $ $\gamma$ $\cup$ $\mathcal T$$(A)$. If $x\in (A\cup \gamma)$, then we are through.  In the case $x\in $ $\mathcal T$$(A)$, there is a circuit $C$ of $M$ such that $x\in C \subseteq (A\cup \{e, x\})$. Then $C' = (C\setminus e) \cup \gamma$ is a circuit of ${M_X^e}$ and  $x\in C' \subseteq A\cup \{\gamma, x\}$. We conclude that $ x\in cl'(A\cup \gamma)$.	If $x\in [(cl(A) - $ $\mathcal  F$$(A))\cup \gamma - (A\cup \gamma)]$, then there is a circuit $C\in$ $\mathcal C$$_{EX}$ of $M$ such that $x\in C\subseteq A \cup  x$. Thus, $C' = C$ is a circuit of ${M_X^e}$ and $x\in C'$ of ${M_X^e}$ contained in $A\cup \{\gamma, x\}$. This implies $ x\in cl'(A')$ and we conclude that $(cl(A) - $ $\mathcal F$$(A))$ $\cup $ $\gamma$ $ \subseteq cl'(A')$. \end{proof}

For iilustation of the above Lemma, let $A = \{4, 5, \gamma\}$ in Example \ref{exm1}. Then $A = \{4, 5\}$, $cl(A) = \{4, 5, x\}$, $\mathcal F$$(A)= \{x\}$, $\mathcal T$$(A)= \{3\}$ and $cl'(A') = \{3, 4, 5, \gamma\} = (cl(A)- $ $\mathcal F$$(A))$ $\cup$ $\gamma$ $\cup $ $\mathcal T$$(A) $.

\begin{lemma}\label{l7} Let $A' = A\cup \gamma$, $cl(A)$ contains no $OX$-circuit and $e\notin cl(A)$. Then $cl'(A') =  cl(A) \cup \gamma$ $\cup $ $\mathcal T$$(A)$.\end{lemma}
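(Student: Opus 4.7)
The plan is to follow the template of Lemmas \ref{l2}--\ref{l6}, proving the two inclusions $cl'(A') \subseteq cl(A)\cup\gamma\cup\mathcal{T}(A)$ and $cl(A)\cup\gamma\cup\mathcal{T}(A)\subseteq cl'(A')$ separately, using Proposition \ref{CMXE} to enumerate the possible forms of the witnessing circuit.

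For the forward inclusion, I would take $x\in cl'(A')\setminus A'$ (so $x\neq\gamma$) and fix a circuit $C'$ of $M_X^e$ with $x\in C'\subseteq A\cup\{\gamma,x\}$, then split on whether $\gamma\in C'$. If $\gamma\notin C'$, then $C'\subseteq A\cup x$ and Proposition \ref{CMXE} gives three possibilities: $C'$ is a circuit of $M$ (necessarily an $EX$-circuit since $\gamma\notin C'$ and $a\notin C'$ at this stage), in which case $x\in cl(A)$; $C'=C_1\cup C_2$ with $C_1,C_2\in\mathcal{C}_{OX}$, which forces the circuit disjoint from $x$ into $A$ and contradicts the hypothesis that $cl(A)$ (hence $A$) contains no $OX$-circuit; or $C'=C\cup a$ with $C\in\mathcal{C}_{OX}$, forcing $x=a$ and $C\subseteq A$, again a contradiction.

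If $\gamma\in C'$, I would run through the four circuit shapes containing $\gamma$ listed in Proposition \ref{CMXE}. The $C\cup\{e,\gamma\}$ case forces $e\in A\cup x$; since $e\notin cl(A)$ gives $e\notin A$, we'd need $x=e$, whence $C\subseteq A$, contradicting the $OX$-free hypothesis. The $(C\setminus e)\cup\{a,\gamma\}$ case forces $x=a$ and $C\setminus e\subseteq A$; because $e\in X$, the circuit $C$ is then an $EX$-circuit of $M$ with $C\setminus e\subseteq A$ and $e\in C$, yielding $e\in cl(A)$, a contradiction. The trivial triangle $\{a,e,\gamma\}$ case requires both $a$ and $e$ to lie in $A\cup x$, which is impossible. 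The surviving shape is $(C\setminus e)\cup\gamma$ with $C\in\mathcal{C}_{OX}$, $e\in C$; here $C\subseteq A\cup\{e,x\}$ with $x,e\in C$ and $x\neq e$, which is exactly the definition of $x\in\mathcal{T}(A)$.

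For the reverse inclusion, $A\cup\gamma\subseteq A'\subseteq cl'(A')$ is trivial. If $x\in cl(A)\setminus A$, a circuit $C$ of $M$ with $x\in C\subseteq A\cup x$ must be an $EX$-circuit (since $cl(A)$ has no $OX$-circuit); such $C$ remains a circuit of $M_X^e$, so $x\in cl'(A')$. If $x\in\mathcal{T}(A)$, then by definition there is $C\in\mathcal{C}_{OX}$ with $x,e\in C$ and $C\subseteq A\cup\{e,x\}$, and $C' = (C\setminus e)\cup\gamma$ is a circuit of $M_X^e$ containing $x$ and contained in $A'\cup x$, so $x\in cl'(A')$. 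The main obstacle is the careful parity bookkeeping in the $\gamma\in C'$ subcases, especially subcase (iii), where one must convert ``$C\setminus e$ is an $OX$-circuit'' into ``$C$ is an $EX$-circuit of $M$'' using $e\in X$ in order to extract the contradiction $e\in cl(A)$.
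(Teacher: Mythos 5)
Your proof is correct and follows essentially the same route as the paper's: both directions are handled by enumerating the circuit types of $M_X^e$ from Proposition \ref{CMXE}, splitting on whether $\gamma\in C'$, with subcase (iii) producing membership in $\mathcal{T}(A)$ and the reverse inclusion using $EX$-circuits and $(C\setminus e)\cup\gamma$ exactly as in the paper. If anything, your treatment of the $\gamma\notin C'$ branch (explicitly ruling out $C'\in\mathcal{C}_1$ and $C'=C\cup a$) is more complete than the paper's one-line dismissal; the parity conversion you flag in subcase (ii) is harmless but not actually needed, since $e\in C\subseteq A\cup e$ already yields $e\in cl(A)$ regardless of the parity of $C\cap X$.
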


\begin{proof} Suppose $A' = A\cup \gamma$, $cl(A)$ contains no $OX$-circuit and $e\notin cl(A)$. If $x\in A' = A \cup \gamma$, then $x\in (cl(A)\cup \gamma)$. If $x \in cl'(A') - A'$, then there is a circuit $C'$ of ${M^e_X}$ such that $x\in C'$ and $C' \subseteq A'\cup  x$.
	If $\gamma\notin C'$, then $x\in C'\subseteq A\cup  x$. This implies that $x\in cl(A)$ in $M$.
	If $\gamma\in C'$, then $C'$ has one of the following forms. 
\begin{itemize}	
\item [(i)] $C' = C\cup \{e, \gamma\}$ where $C \in$ $\mathcal C$$_OX$ and $e \notin C$. Then $x\in C' =  C\cup \{e, \gamma\} \subseteq A\cup \{\gamma, x\}$. This implies that $C\subseteq A$ and $x = e$ which is not possible as $A$ contains no member of $\mathcal C$$_{OX}$.
\item [(ii)] $C' = (C \setminus e)\cup \{a, \gamma\}$ where $C$ is a circuit of $M$ such that $e \in C$ and $C\setminus e$ contains an odd number of elements of $X$. Then $x\in C' =  (C\setminus e )\cup \{a, \gamma\} \subseteq A\cup \{\gamma, x\}$ and $(C\setminus e) \cup  a \subseteq A\cup  x$. Therefore, $x = a$ and $(C\setminus e) \subseteq A$. Further, $C \subseteq A \cup  e$ implies that $e\in cl(A)$ and this is a contradiction to the fact that $e\notin cl(A)$.
\item [(iii)] $C' = (C \setminus e)\cup \gamma$ where $e\in C$ and $C\in$ $\mathcal C$$_{OX}$ is a circuit of $M$. Then $x\in C' =  (C\setminus e )\cup \gamma \subseteq A\cup \{\gamma, x\}$ and this implies that $x\in C\setminus e \subseteq A\cup  x$. That is $x\in C\subseteq A \cup \{e, x\}$. We conclude that $x\in $ $\mathcal T$$(A)$.
\item [(iv)] $C' = \{a, e, \gamma\}$. Then $x\in C' = \{a, e, \gamma\} \subseteq A\cup \{\gamma, x\}$ and we conclude that $x = a$ or $x = e$. In either case, we get a contradiction. We conclude that, $ cl'(A')\subseteq cl(A)\cup \gamma$ $\cup $ $\mathcal T$$(A)$.
\end{itemize}

	Conversely, let $x\in cl(A)$ $\cup$ $\gamma$ $\cup$ $\mathcal T$$(A)$. If $x = \gamma$ or $x\in A$, then $x\in cl'(A\cup \gamma)$. If $x\in cl(A)- A$, then there is a circuit say $C$ of $M$ such that $x\in C \subseteq A\cup  x$. As $cl(A)$ contains no member of $\mathcal C$$_{OX}$, $C\in$ $\mathcal C$$_{EX}$. Thus, $C$ is also a circuit of $M_X^e$ and $x\in cl'(A')$.  In the case $x\in $ $\mathcal T$$(A)$, there is a circuit $C$ of $M$ such that $x\in C \subseteq A \cup \{e, x\}$. Then $C' = (C\setminus e) \cup \gamma $ is a circuit of ${M_X^e}$ and  $x\in C' \subseteq A\cup \{\gamma , x\}$. We conclude that $ x\in cl'(A\cup \gamma) = cl'(A')$.\end{proof} 
		
To exemplify the above Lemma, let $A = \{1, 6\}$ in Example \ref{exm1}. Then $A' = A$, $cl(A) = \{1, 5, 6\}$, $\mathcal T$$(A)= \{2\}$ and $cl'(A') = \{1, 2, 5, 6, \gamma\} = cl(A) \cup \gamma$ $\cup $ $\mathcal T$$(A)$.
	
\begin{lemma}\label{l8} Suppose one of the following conditions is satisfied
	\begin{enumerate}
	\item $A' = A\cup \{a, \gamma\}$;
	\item $A' = A\cup a$ and $e\in cl(A)$;
	\item $A' = A\cup \gamma$ and $A$ contains an $OX$-circuit;
	\item $A' = A\cup \gamma$ and $e\in cl(A)$; and
	\item $A' = A$, $A$ contains an $OX$-circuit and $e\in cl(A)$.
\end{enumerate} Then $cl'(A') =  cl(A) \cup \{ a, e, \gamma \}$.\end{lemma}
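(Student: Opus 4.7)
The plan is to prove $cl'(A')=cl(A)\cup\{a,e,\gamma\}$ by double inclusion in each of the five cases, exploiting two general devices. First, the fundamental circuit $\Delta=\{e,a,\gamma\}$ of $M_X^e$ allows bootstrapping: once any one of $a,e,\gamma$ is known to lie in $cl'(A')$, the closure identity $cl'(B\cup y)=cl'(B)$ for $y\in cl'(B)$ together with $\Delta$ forces the remaining two into $cl'(A')$ as well. Second, Proposition~\ref{CMXE} converts $M$-circuits into $M_X^e$-circuits and back, which drives the actual containments.

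For the forward inclusion $cl(A)\cup\{a,e,\gamma\}\subseteq cl'(A')$ I first exhibit one of $a,e,\gamma$ inside $cl'(A')$ in each case. In (1), $a,\gamma\in A'$ and $\Delta\subseteq A'\cup e$ yields $e\in cl'(A')$. In (3), the hypothesised $OX$-circuit $C\subseteq A$ produces $C\cup a\in\mathcal{C}_2\subseteq A'\cup a$, placing $a$ in $cl'(A')$. In (2) and (5), the $M$-witness $C$ for $e\in cl(A)$ yields either $C$ itself (if $C\in\mathcal{C}_{EX}$) or $C\cup a\in\mathcal{C}_2$ (if $C\in\mathcal{C}_{OX}$) as an $M_X^e$-circuit placing $e$ in $cl'(A')$; the second alternative is available because $a\in A'$ in case (2) and because the $OX$-circuit in $A$ already puts $a$ in $cl'(A')$ in case (5). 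In (4), with $\gamma\in A'$, an $EX$-witness $C_2$ for $e\in cl(A)$ yields $(C_2\setminus e)\cup\{a,\gamma\}\in\mathcal{C}_3\subseteq A'\cup a$, putting $a$ into $cl'(A')$. Once one of $a,e,\gamma$ is secured, $\Delta$ supplies the remaining two. Finally, for each $x\in cl(A)-A$ I lift an $M$-witness $C$ to an $M_X^e$-witness: $C$ itself if $C\in\mathcal{C}_{EX}$, else $C\cup a$ combined with $a\in cl'(A')$.

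For the reverse inclusion, I take $x\in cl'(A')$ with $x\notin A'\cup\{a,e,\gamma\}$, pick a circuit $C'$ of $M_X^e$ with $x\in C'\subseteq A'\cup x$, and classify $C'$ by the families $\mathcal{C}_0,\mathcal{C}_1,\mathcal{C}_2,\mathcal{C}_3,\{\Delta\}$ of Proposition~\ref{CMXE}. Each subtype either extracts an $M$-circuit containing $x$ inside $A\cup x$ (so $x\in cl(A)$) or, for circuit forms in $\mathcal{C}_2$ and $\mathcal{C}_3$, uses the hypotheses of the relevant case to derive a contradiction or force $x\in\{a,e,\gamma\}$, mirroring the reverse-direction arguments in Lemmas~\ref{l2}--\ref{l7}. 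The main obstacle lies in case (4): producing the $EX$-witness used above from the hypothesis $e\in cl(A)$ alone is delicate, since a priori only an $OX$-witness may be visible. One must either combine such an $OX$-witness with a further $M$-circuit (via the symmetric-difference proposition on $OX/EX$-circuits stated earlier in \S2) to manufacture the required $EX$-circuit, or verify directly that the circuits available in $A\cup\gamma\cup a$ still suffice to capture $a$ in $cl'(A')$. This parity bookkeeping is the step on which the remaining routine verifications hinge.
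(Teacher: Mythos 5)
Your overall architecture (double inclusion, classifying circuits via Proposition~\ref{CMXE}, and using the triangle $\Delta=\{e,a,\gamma\}$ to pass between $a$, $e$ and $\gamma$) is the same as the paper's. But the obstacle you flag in case (4) is not a piece of ``parity bookkeeping'' that remains to be tidied up: it is a genuine gap, and neither of your proposed repairs can close it, because the claimed equality actually fails there. Take $A=\{2,6\}$ in Example~\ref{exm1}, so $A'=\{2,6,\gamma\}$ and $e=y\in cl(A)=\{2,6,y\}$, i.e.\ the hypotheses of case (4) hold. The only circuit of $M$ through $y$ inside $A\cup y$ is the $OX$-circuit $\{2,6,y\}$, so no $EX$-witness exists and the symmetric-difference proposition of \S2 cannot manufacture one (it needs both an $OX$- and an $EX$-witness as input). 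By Lemma~\ref{RMXE}(3) we get $r'(A\cup\gamma)=r(A)=2$, while Lemma~\ref{RMXE}(4) gives $r'(A\cup\{a,\gamma\})=r(A)+1=3$; hence $a\notin cl'(A')$, and indeed $cl'(\{2,6,\gamma\})=\{2,6,\gamma\}$ (this also follows from the paper's own illustration of Lemma~\ref{l5} together with idempotence of closure), whereas the lemma predicts $\{2,6,y,a,\gamma\}$. So your ``direct verification that the circuits available in $A\cup\gamma\cup a$ still suffice'' would come up empty. For what it is worth, the paper's own proof of Case 4 makes the same slip: it offers $(C\setminus e)\cup\gamma$ as a circuit ``contained in $A'\cup a$,'' but that circuit does not contain $a$ and therefore certifies nothing about $a\in cl'(A')$.

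Two further points. First, your bootstrapping device is overstated: knowing \emph{one} of $a,e,\gamma$ lies in $cl'(A')$ does not let $\Delta$ deliver the other two, since $\Delta\subseteq (A'\cup y)\cup z$ requires the third element already to be present; you need two of the three (in your concrete cases you do in fact always secure two, so this is a wording issue rather than a fatal one, but it should be stated correctly). Second, your reverse inclusion is also not routine in case (3): with $\gamma\in A'$ and $e\notin cl(A)$, a circuit $(C\setminus e)\cup\gamma\in\mathcal{C}_3$ with $C\in\mathcal{C}_{OX}$, $e\in C\subseteq A\cup\{e,x\}$, places an element $x\in\mathcal{T}(A)$ into $cl'(A')$ without forcing $x\in cl(A)$ or a contradiction (e.g.\ $A'=\{4,5,x,\gamma\}$ in Example~\ref{exm1} has $3\in cl'(A')$ but $3\notin cl(A)\cup\{a,e,\gamma\}$). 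In short, the lemma as stated needs additional hypotheses in cases (3) and (4), and any correct proof must either add them or change the conclusion; a proof attempt that mirrors the paper's case analysis cannot succeed as written.
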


\begin{proof} Assume that $x\in cl'(A')$. If $x\in A'$ then $x\in A$ or $x\in \{a, \gamma\}$. In either case $x\in cl(A) \cup \{ a, e, \gamma \}$. So, assume that $x\in cl'(A')- A'$. Then there is a circuit $C'$ of $M^e_X$ such that $x\in C' \subseteq A'\cup  x $.
  If $x\in E$, then by Proposition \ref{CMXE}, $C'$ is a circuit of $M$ or $C' = C_1 \cup C_2$, where $C_1 , C_2 \in$ $\mathcal C$$_{OX}$. In either case $x\in cl(A)$.
	Therefore, we conclude that $cl'(A')\subseteq cl(A)\cup \{a, e, \gamma\}$.

	Conversely, let $x\in cl(A)\cup \{a, e, \gamma\}$. If $x = a$, then in Case 1) and 2) $a\in A'$.\\
Case 3) If $C$ is a member of $\mathcal C$$_{OX}$ contained in $A$, then $C' = C \cup  a $ $\subseteq$ $A \cup a$ forms a circuit of $M^e_X$. Thus, $a\in cl'(A')$. \\
Case 4) If $\gamma \in A'$ and $e \in cl(A)$, then there is a $EX$ or $OX$-circuit $C$ containing $e$. Then $C' = (C \setminus e) \cup \{a, \gamma\}$ or  $C' = (C \setminus e) \cup \gamma$ are circuits of $M_X^e$ contained in $A' \cup a = A\cup \{a, \gamma\}$.\\
Case 5) If $A$ contains an $OX$-circuit, then $a \in cl'(A')$. 

Now assume that $x\in cl(A)- A$ and let $C$ be a circuit of $M$ such that $x\in C\subseteq A\cup  x$. If $C\in$ $\mathcal C$$_{EX}$, then $C' = C$; otherwise $C' = C\cup  a$ is a circuit of ${M^e_X}$. Thus, $x\in C$ implies that $x\in cl'(A\cup a)$. As $a\in cl'(A')$, it follows that $x\in cl'(A\cup a) = cl'(A')$.

If $x = \gamma$, then in Cases 1, 3 and 4, $\gamma \in A'$ implies $\gamma \in cl'(A')$. In Cases 2 and 5, $e \in cl(A)$ then one of the following two cases occurs. 
	\begin{itemize}
	\item [(i)] $e\in C\subseteq A \cup  e$ where $C\in$ $\mathcal C$$_{OX}$ and $C  \subseteq cl(A)$. Then $C' = (C\setminus e)$ $\cup$ $\gamma$ $\subseteq A \cup \gamma$ and we conclude that $\gamma \in cl'(A')$. 
	\item [(ii)] $e\in C\subseteq A \cup  e$ where $C\in$ $\mathcal C$$_{EX}$ and $C  \subseteq cl(A)$. Then $C' = ((C\setminus e) \cup \{a, \gamma\})\subseteq A \cup \{a, \gamma \}$ and since $a\in cl'(A')$, $\gamma \in cl'(A\cup a) = cl'(A')$. This implies that $cl(A)\cup \{a, \gamma\} \subseteq cl'(A')$. \end{itemize} 
	
	In all the above cases, we observe that $a, \gamma \in cl'(A')$. Consequently, $\{a, e, \gamma\}$ forms a circuit in $M_X^e$ and hence $e\in cl'(A')$. This completes the proof.
	\end{proof}

We illustrate the above Lemma with the help of Example \ref{exm1} for different types of $A$. 
\begin{enumerate}
	\item If $A' = \{a, \gamma\}$ then $cl'(A') = \{a, e, \gamma\}$.
	\item If $A' = \{a, 6, 2\}$ then $cl'(A') = \{a, 6, 2, e, \gamma\}$.
	\item If $A' = \{4, 5, x, \gamma\}$ then $cl'(A') = \{4, 5, x, a, e, \gamma\}$.
	\item If $A' = \{e, \gamma\}$ then $cl'(A') = \{a, e, \gamma\}$.
	\item If $A' =\{2, 6, y\}$ then $cl'(A') = \{2, 6, a, e, \gamma\}$.
\end{enumerate}

\section {Flats of es-splitting matroids}
In the following Theorem we characterize the flats of the es-splitting matroid $M_X^e$ in terms of the flats of the matroid $M$. 
\begin{theorem} \label{thm} Let $M$ be a binary matroid on a set $E$ and $M^e_X$ be the splitting matroid of $M$ with respect to a subset $X$ of $E$ where $e\in X$. Suppose $A'\subseteq E\cup \{a, \gamma \}$ and $A = A'\setminus \{a, \gamma\}$ is a flat of $M$. Then $A'$ is a flat of $M^e_X$ if one one of the following condition is satisfied 
\begin{enumerate}
\item $A' = A$ and $(A\cup e)$ contains no $OX$-circuit and $\mathcal F$$(A) = \phi$. 
\item $A' = A$ and $cl(A)$ contains no $OX$-circuit.  
\item $A' = A\cup a$, $e \notin cl(A)$.
\item $A' = A \cup \gamma$, $e \notin cl(A)$ and $cl(A)$ contains an $OX$-circuit but $A$ contains no $OX$-circuit, $\mathcal F$ $(A))= \phi$ and $\mathcal T$$(A) = \phi$.
\item $A' = A\cup \gamma$, $cl(A)$ contains no $OX$-circuit and $e\notin cl(A)$ and $\mathcal T$$(A) = \phi$ and 
\item $A' = A\cup \{a, \gamma\}$ and $e \in A$. 
\end{enumerate}
\end{theorem}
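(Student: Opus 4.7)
The plan is to apply the preceding characterization of $cl'$ (Theorem 3.1, established via Lemmas \ref{l2}--\ref{l8}) directly to each of the six situations. Since $A$ is a flat of $M$, we have $cl(A)=A$, so every case reduces to checking that the appropriate formula for $cl'(A')$, taken from Theorem 3.1 and evaluated at this particular $A$, collapses to $A'$. In other words, the extra hypotheses in each clause (emptiness of $\mathcal{F}(A)$ or of $\mathcal{T}(A)$, membership of $e$ in $A$, existence or non-existence of an $OX$-circuit in $A$ or $cl(A)$) are chosen precisely so that the decorations which could otherwise enlarge $cl'(A')$ beyond $A'$ are forced to vanish.

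Concretely, I would go through the six conditions in order, each time citing the matching lemma and substituting $cl(A)=A$. Condition (1) matches the hypothesis of Lemma \ref{l2}, giving $cl'(A')=cl(A)-\mathcal{F}(A)=A-\emptyset=A=A'$. Condition (2) matches Lemma \ref{l3}, giving $cl'(A')=cl(A)=A=A'$. Condition (3) falls under part~(2) of Lemma \ref{l4}, yielding $cl'(A')=cl(A)\cup a=A\cup a=A'$. Condition (4) fits the hypothesis of Lemma \ref{l6}, so $cl'(A')=(cl(A)-\mathcal{F}(A))\cup\gamma\cup\mathcal{T}(A)=A\cup\gamma=A'$ using the two emptiness assumptions. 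Condition (5) matches Lemma \ref{l7}, giving $cl'(A')=cl(A)\cup\gamma\cup\mathcal{T}(A)=A\cup\gamma=A'$. Finally, condition (6) matches part~(1) of Lemma \ref{l8}, so $cl'(A')=cl(A)\cup\{a,e,\gamma\}$, and since $e\in A$ this simplifies to $A\cup\{a,\gamma\}=A'$.

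The argument is essentially bookkeeping and I do not expect a serious obstacle, since each clause of the theorem is an exact transcription of a hypothesis in Theorem 3.1 together with just enough side conditions to kill the extraneous terms that would otherwise appear in the closure. The one point requiring care is to verify for each clause that the listed hypotheses fall under exactly one branch of Theorem 3.1, so that no rival case gives a different formula for $cl'(A')$; this is immediate from the (pairwise mutually exclusive) hypotheses of Lemmas \ref{l2}--\ref{l8}, and the use of $cl(A)=A$ throughout.
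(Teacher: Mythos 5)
Your proposal is correct and coincides with the paper's own (much terser) argument: the paper simply states that the theorem follows from Lemmas \ref{l2}--\ref{l8}, and your case-by-case substitution of $cl(A)=A$ into the closure formulas of those lemmas is exactly the bookkeeping that justifies that claim. (One minor observation, which concerns the theorem's statement rather than your proof: since $A$ is a flat, $cl(A)=A$, so the hypothesis of condition~(4) that $cl(A)$ contains an $OX$-circuit while $A$ does not is vacuous, and likewise $\mathcal{F}(A)\subseteq cl(A)-A=\emptyset$ holds automatically in conditions~(1) and~(4).)
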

The proof of Theorem \ref{thm} follows from Lemmas \ref{l2}, \ref{l3}, \ref{l4}, \ref{l5}, \ref{l6}, \ref{l7} and \ref{l8}. We illustrate the above Theorem with the help of Example \ref{exm1}. The set of all flats of $M$

$\{\{1\}$, 	$\{2\}$, $\{3\}$, $\{4\}$, $\{5\}$,	$\{6\}$,	$\{x\}$,	$\{y\}$,	$\{1, 4\}$,	$\{1, 3\}$, 	$\{1, x\}$,	$\{1, y\}$,	$\{1, 2\}$,	$\{2, 3\}$, $\{2, 4\}$, $\{2, 5\}$, $\{2, x\}$, $\{3, 4\}$, $\{3, 5\}$, $\{3, 6\}$, $\{4, 6\}$, $\{4, y\}$, $\{1, 5, 6\}$, $\{x, y, 3\}$, $\{2, 6 , y\}$, $\{1, 2, 3, 4\}$, $\{4, 5, x\}$, $\{1, 2, 5, 6, y\}$, $\{3, 4, 5, x, y\}$, $\{1, 4, 5, 6, x\}$, $\{2, 3, 6, x, y \}$, $E\}$.

The set of all flats of $M^e_{x, y}$ is 
$\{\{1\}$, $\{2\}$, $\{3\}$, $\{4\}$, $\{5\}$,	$\{6\}$,	$\{x\}$,	$\{y\}$, $\{a\}$, $\{\gamma\}$,	$\{1, 4\}$,	$\{1, 3\}$, 	$\{1, x\}$,	$\{1, y\}$,	$\{1, 2\}$,	$\{2, 3\}$, $\{2, 4\}$, $\{2, 5\}$, $\{2, x\}$, $\{3, 4\}$, $\{3, 5\}$, $\{3, 6\}$, $\{4, 6\}$, $\{4, y\}$, $\{1, a\}$, $\{2, a\}$, $\{3, a\}$, $\{4, a\}$, $\{5, a\}$,	$\{6, a\}$,	$\{x, a\}$,	$\{1, \gamma\}$, $\{3, \gamma\}$, $\{4, \gamma\}$, $\{5, \gamma\}$,	$\{x, \gamma\}$, $\{1, 5, 6\}$, $\{x, y, 3\}$, $\{a, y ,\gamma\}$, $\{2, 6 ,\gamma\}$, $\{1, 2, 3, 4\}$, $\{4, 5, a, x\}$, $\{1, 2, 5, 6, \gamma\}$, $\{ 3, a, x, y, \gamma\}$, $\{2, 6, a, y, \gamma\}$, $\{3, 4, 5, a, x, y\}$,\\ $\{1, 4, 5, 6, a, x\}$, $\{1, 2, 5, 6, a, y, \gamma\}$, $\{2, 3, 6, a, x, y, \gamma\}$, $E\}$.

\end{document}